
\documentclass[11pt]{amsart}

\usepackage{upgreek}
\usepackage{libertine} 
\usepackage{euler} 
\usepackage{eucal} 
\usepackage[utf8]{inputenc}

\usepackage[T1]{fontenc}
\usepackage{textcomp}
\usepackage[utf8]{inputenc}

\usepackage{amsopn, amsthm, amsgen, amscd,amsmath,amssymb}
\usepackage[bbgreekl]{mathbbol}
\DeclareMathAlphabet{\mathbbm}{U}{bbm}{m}{n}
\usepackage{color}
\usepackage{tikz}
\usepackage{epstopdf}
\usepackage{enumerate}

\author{Alexander Berenstein}
\address{Alexander Berenstein
\\Universidad de los Andes
\\ Departamento de Matem\'aticas
\\Cra 1 \# 18A-10, Bogot\'a, Colombia.}

\urladdr{www.matematicas.uniandes.edu.co/\textasciitilde aberenst}

\author{Tapani Hyttinen}
\address{Tapani Hyttinen
\\University of Helsinki
\\Department of Mathematics and Statistics
\\Gustaf H\"allstr\"ominkatu 2b. Helsinki 00014, Finland.}
\email{tapani.hyttinen@helsinki.fi}

\author{Andr\'es Villaveces}
\address{Andr\'es Villaveces
\\Universidad Nacional de Colombia
\\ Departamento de Matem\'aticas
\\Av. Cra 30 \# 45-03, Bogot\'a 111321, Colombia.}
\email{avillavecesn@unal.edu.co}

\thanks{This work was partially supported by Colciencias grant
  \emph{Métodos de Estabilidad en Clases No Estables.} The second and
  third author were also sponsored by Catalonia's \emph{Centre de
    Recerca Matemàtica} (Intensive Research Program in Strong Logics)
  and by the University of Helsinki in 2016 for part of this work. The
  third author was also partially sponsored by Colciencias 
  (Proy. 1101-05-13605 CT-210-2003).}

\makeatletter
\def\newrefformat#1#2{%
  \@namedef{pr@#1}##1{#2}}
\def\prettyref#1{\@prettyref#1:}
\def\@prettyref#1:#2:{%
  \expandafter\ifx\csname pr@#1\endcsname\relax%
    \PackageWarning{prettyref}{Reference format #1\space undefined}%
    \ref{#1:#2}%
  \else%
    \csname pr@#1\endcsname{#1:#2}%
  \fi%
}
\makeatother

\newrefformat{eq}{\textup{(\ref{#1})}}
\newrefformat{cha}{Chapter \ref{#1}}
\newrefformat{sec}{Section \ref{#1}}
\newrefformat{tab}{Table \ref{#1} on page \pageref{#1}}
\newrefformat{fig}{Figure \ref{#1} on page \pageref{#1}}


\makeatletter
\def\indsym#1#2{%
  \setbox0=\hbox{$\m@th#1x$}%
  \kern\wd0%
  \hbox to 0pt{\hss$\m@th#1\mid$\hbox to 0pt{$\m@th#1^{#2}$}\hss}%
  \lower.9\ht0\hbox to 0pt{\hss$\m@th#1\smile$\hss}%
  \kern\wd0} \newcommand{\ind}[1][]{\mathop{\mathpalette\indsym{#1}}}
\def\nindsym#1#2{%
  \setbox0=\hbox{$\m@th#1x$}%
  \kern\wd0%
  \hbox to 0pt{\mathchardef\nn="3236\hss$\m@th#1\nn$\kern1.4\wd0\hss}
  \hbox to 0pt{\hss$\m@th#1\mid$\hbox to 0pt{$\m@th#1^{#2}$}\hss}%
  \lower.9\ht0\hbox to 0pt{\hss$\m@th#1\smile$\hss}%
  \kern\wd0}
\newcommand{\nind}[1][]{\mathop{\mathpalette\nindsym{#1}}}
\makeatother

\def\bb{\bar b}   \def\bz{\bar z}
  \def\bc{\bar c} 
 \def\ba{\bar a}  \def\be{\bar e}
\def\bd{\bar d}

\def\1{1}
\def\0{0}

  \def\U{\mathcal{U}}

  \def\C{\mathcal{C}} 
   
     \def\Sum{\sum}
\def\bra{\langle} \def\ket{\rangle} \def\vphi{\varphi}
\def\bz{\bar z}       \def\bu{\bar u}

\def\C{\mathcal{A}}

\def\C{\mathcal{C}}

\def\H{\mathcal{H}}
\def\U{\mathcal{U}}

\def\vphi{\varphi}
\def\dotminus{-^{\! \! \! \!  \cdot}}

\def\Astr{\mathfrak{A}}
\def\Bstr{\mathfrak{B}}
\def\Cstr{\mathfrak{C}}

\newcommand{\mynewthm}[3][dummythm]{%
  \newtheorem{#2}[#1]{#3}%
  \newrefformat{#2}{#3 \ref{##1}}%
}

\swapnumbers

\theoremstyle{plain}
\mynewthm{theo}{Theorem}
\mynewthm{prop}{Proposition}
\mynewthm{lema}{Lemma}
\mynewthm{fact}{Fact}
\mynewthm{coro}{Corollary}
\mynewthm{ques}{Question}
\mynewthm{exer}{Exercise}

\theoremstyle{definition}
\mynewthm{defi}{Definition}
\mynewthm{rema}{Remark}
\mynewthm{obse}{Observation}
\mynewthm{conv}{Convention}
\mynewthm{nota}{Notation}
\mynewthm{exam}{Example}

\DeclareMathOperator{\dcl}{dcl}
\DeclareMathOperator{\acl}{acl}
\DeclareMathOperator{\cl}{cl}
\DeclareMathOperator{\bcl}{bcl}
\DeclareMathOperator{\ecl}{ecl}

\DeclareMathOperator{\events}{events}

 \DeclareMathOperator{\tp}{tp}

\DeclareMathOperator{\spane}{span}
\DeclareMathOperator{\dist}{dist}
\DeclareMathOperator{\qftp}{qftp}
\DeclareMathOperator{\pr}{pr}
\DeclareMathOperator{\id}{id}


\title{Hilbert spaces with generic predicates}

\begin{document}
\maketitle
\normalsize

\begin{abstract}
  We study the model theory of expansions of Hilbert spaces by generic
  predicates. We first prove the existence of model companions for
  generic  expansions of Hilbert spaces in the form first of a
  distance function to a \emph{random substructure}, then a distance
  to a random subset. The theory obtained
  with the random substructure is $\omega$-stable, while the one
  obtained  with the distance to a random subset is $TP_2$ and
  $NSOP_1$. That example is the first continuous structure in that
  class. 
\end{abstract}

\section{Introduction}
This paper deals with Hilbert spaces expanded with random predicates
in the framework of continuous logic as developed in \cite{BBHU}. The
model theory of Hilbert spaces is very well understood, see
\cite[Chapter 15]{BBHU} or \cite{BB}. However, we briefly
review some of its properties at the end of this section.

In this paper we build several new expansions, by various kinds of
random predicates (random substructure and the distance to a random
subset) of Hilbert spaces, and
study them within the framework of continuous logic. While our
constructions are not exactly metric Fra\"\i ss\'e (failing the
hereditary property), some of them are
indeed amalgamation classes and we study the model theory of their
limits.

Several papers deal with generic expansions of Hilbert
spaces. Ben Yaacov, Usvyatsov and Zadka \cite{BUZ} studied the
expansion of a Hilbert space with a generic automorphism.
The models of this theory are  expansions of Hilbert spaces with a
unitary map whose spectrum is $S^1$. A model of this theory can be
constructed by amalgamating together the collection of $n$-dimensional
Hilbert spaces with a unitary map whose eigenvalues are the $n$-th
roots of unity as $n$ varies in the positive integers. More work on
generic automorphisms can be found in \cite{Be}, where the first
author of this paper studied Hilbert spaces expanded with a random
group of automorphisms $G$.

There are also several papers about expansions of Hilbert spaces with
random subspaces. In \cite{BB} the first author and Buechler
identified the
saturated models of the theory of beautiful pairs of a Hilbert
space. An analysis of lovely pairs (the generalization of beautiful
pairs (belles paires) to simple theories) in the setting of compact
abstract theories is
carried out in \cite{BY2}. In the (very short)
second section of this paper
we build the beautiful pairs of Hilbert spaces as
the model companion of the theory of Hilbert spaces with an orthonormal 
projection. We provide an axiomatization for this class and we show
that the
projection operator into the subspace is interdefinable with a
predicate for the
distance to the subspace. We also prove that the theory of beautiful
pairs of Hilbert spaces is $\omega$-stable. Many of the properties of
beautiful pairs of Hilbert spaces are known from the literature or
folklore, so this section is mostly a compilation of results.

In the third section we add a predicate for the distance to a random
subset.
This construction was inspired by the idea of finding an analogue to
the first order generic predicates studied by Chatzidakis and Pillay
 in \cite{CP}. The
axiomatization we found for the model companion was inspired in
the ideas of  \cite{CP} together with the following observation: in
Hilbert spaces there is a definable function that measures the
distance between a point and a model. We prove that the theory of
Hilbert spaces with a generic predicate is unstable. We also study a
natural notion of independence in a monster model of this theory and
prove some of its properties. Several
natural independence notions have various good properties, but the
theory fails to be simple and even fails to be NTP$_2$.



\subsection{Model theory of Hilbert spaces (quick review)}\label{modthH}

\subsubsection{Hilbert spaces}
We follow \cite{BBHU} in our  study of the model theory of a real Hilbert
space and its expansions. We assume the reader is familiar with the
basic concepts of continuous logic as presented in \cite{BBHU}. A
Hilbert space $\H$ can be seen as a multi-sorted structure
$(B_n(H),0,+,\bra,\ket,\{\lambda_r:r\in \mathbb{R}\})_{0<n<\omega}$,
where $B_n(H)$
is the ball of radius $n$, $+$ stands for addition of vectors
(defined from $B_n(H)\times B_n(H)$ into $B_{2n}(H)$),
$\bra,\ket\colon B_n(H)\times
B_n(H)\to [-n^2,n^2]$ is the inner product, $0$ is a constant for the
zero
vector and $\lambda_r\colon B_n(H)\to B_{n( \lceil |r|\rceil)}H$ is
the multiplication by $r\in \mathbb{R}$.

We denote by $L$ the language of Hilbert spaces and by $T$ the theory
of Hilbert spaces.

By a universal domain $\H$ of $T$ we mean a Hilbert space $\H$
which is $\kappa$-saturated and $\kappa$-strongly homogeneous with
respect to types in the language $L$, where $\kappa$ is a cardinal
larger
than $2^{\aleph_0}$. Constructing such a
structure is straightforward ---just consider a Hilbert space with
an orthonormal basis of cardinality at least $\kappa$.

We will assume that the reader is familiar with the metric versions of
\emph{definable closure} and \emph{non-dividing}. The reader can
check \cite{BBHU,BB} for the definitions.

\begin{nota}
Let $\dcl$ stand for the definable closure and $\acl$ stand for the
algebraic closure in the language $L$.
\end{nota}

\begin{fact}\label{dclh}
  Let $A\subset \H$ be small. Then $\dcl(A)=\acl(A)=$ the smallest
  Hilbert subspace of $\H$ containing $A$.
\end{fact}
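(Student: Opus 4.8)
The plan is to prove that for small $A \subset \H$, the three objects $\dcl(A)$, $\acl(A)$, and $\overline{\spane}(A)$ (the smallest closed Hilbert subspace containing $A$) all coincide. I would establish a chain of inclusions. The easy containments are $\overline{\spane}(A) \subseteq \dcl(A)$ and $\dcl(A) \subseteq \acl(A)$. The first holds because the Hilbert-space operations $+$ and $\lambda_r$ are definable (indeed given by continuous terms of the language $L$), so any finite linear combination of elements of $A$ lies in $\dcl(A)$; since $\dcl(A)$ is metrically closed (a standard property of definable closure in continuous logic), it contains the closure of the linear span. The inclusion $\dcl(A) \subseteq \acl(A)$ is immediate from the definitions, since a definable element has a singleton as its set of conjugates and is therefore algebraic.

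The substantive direction is $\acl(A) \subseteq \overline{\spane}(A)$, which simultaneously forces the whole chain to collapse. Let $V = \overline{\spane}(A)$ and write $\H = V \oplus V^{\perp}$ using the orthogonal decomposition guaranteed by the Hilbert-space structure. First I would show that the orthogonal projection $P_V$ is $\emptyset$-definable over $A$ via the distance predicate: for any $b \in \H$, the value $\dist(b, V) = \|b - P_V(b)\|$ is determined, and $P_V(b)$ is the unique closest point in $V$. The key step is then to show that any $b \notin V$ has unboundedly many conjugates over $A$ in the universal domain $\H$. Writing $b = P_V(b) + w$ with $w = b - P_V(b) \in V^{\perp}$ and $w \neq 0$, I would use the $\kappa$-strong homogeneity of $\H$: there is an automorphism fixing $V$ pointwise (hence fixing $A$) that acts as an arbitrary rotation or sign change on the orthogonal complement $V^{\perp}$. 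Such an automorphism sends $w$ to a distinct vector $w'$ of the same norm, producing a conjugate $b' = P_V(b) + w'$ of $b$ over $A$. Since $V^{\perp}$ has infinite dimension (as $A$ is small and $\H$ is a universal domain), one can produce infinitely many — indeed a whole sphere's worth, hence a non-compact set of — conjugates, so $b \notin \acl(A)$.

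The main obstacle, and the point requiring care in the continuous setting, is the notion of algebraicity itself: in continuous logic $\acl(A)$ is the set of elements $b$ whose orbit under $\mathrm{Aut}(\H/A)$ is \emph{compact} (equivalently, has empty interior in no relevant sense — precisely, the type-orbit is compact), not merely finite. So the argument that $b \notin V$ implies $b \notin \acl(A)$ must exhibit not just infinitely many conjugates but a conjugate-orbit that is \emph{not totally bounded}. I would handle this by noting that the orbit of $b$ under automorphisms fixing $V$ contains the full sphere $\{P_V(b) + w' : w' \in V^{\perp}, \|w'\| = \|w\|\}$, and since $\dim V^{\perp} = \infty$ this sphere contains an infinite set of pairwise orthogonal-displacement points at mutual distance $\sqrt{2}\,\|w\|$, which is not totally bounded. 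Hence the orbit is non-compact and $b \notin \acl(A)$. Conversely every element of $V$ is fixed by every automorphism fixing $A$ setwise on $V$, confirming $V \subseteq \dcl(A)$. Combining all inclusions yields $\dcl(A) = \acl(A) = \overline{\spane}(A)$, completing the proof.
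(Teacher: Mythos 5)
Your proof is correct, and it is essentially the argument behind the paper's own treatment: the paper gives no proof at all, deferring to Lemma 3 of \cite{BB}, and your reconstruction is the standard one — $\overline{\spane(A)}\subseteq\dcl(A)$ because the vector-space operations are terms and $\dcl$ is metrically closed, while any $b\notin V=\overline{\spane(A)}$ has a large orbit under unitaries of the form $\mathrm{id}_V\oplus U'$ fixing $V$ pointwise (note these explicit unitaries already suffice; strong homogeneity is not needed). You also correctly isolate and handle the one genuinely continuous-logic subtlety, namely that $\acl$ requires a \emph{compact} rather than finite set of conjugates, which your family $\{P_V(b)+\|w\|e_i\}$ at mutual distance $\sqrt{2}\,\|w\|$ (available since $A$ small forces $\dim V^{\perp}=\infty$) rules out.
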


\begin{proof}
See Lemma 3 in \cite[p. 80]{BB}
\end{proof}

Recall a characterization of non-dividing in pure Hilbert
spaces (that will be useful in the more sophisticated constructions in
forthcoming sections):

\begin{prop}\label{independence}
Let $B,C\subset H$ be small, let $(a_1,\dots,a_n) \in
{H}^n$ and assume that $C=\dcl(C)$, so $C$ is a Hilbert
subspace of $H$. Denote by $P_C$ the projection on $C$. Then
$\tp(a_1,\dots,a_n/C \cup B)$ does not divide over $C$ if and only
if for all $i\leq n$ and all $b\in B$, $a_i-P_C(a_i) \perp
b-P_C(b)$.
\end{prop}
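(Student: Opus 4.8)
The plan is to reduce everything to the explicit description of types over a Hilbert subspace. Recall from quantifier elimination together with Fact~\ref{dclh} that for a subspace $D=\dcl(D)$ the type $\tp(\bar a/D)$ is completely determined by the projections $P_D(a_i)$ and by the Gram matrix $(\langle a_i-P_D(a_i),\,a_j-P_D(a_j)\rangle)_{i,j}$ of the orthogonal parts, and that $P_D$ is a $D$-definable function. Writing $\alpha_i=a_i-P_C(a_i)$ and, for $b\in B$, $\beta=b-P_C(b)$, I will prove the two implications separately, using Bessel's inequality for the forward direction and an explicit realization inside a large orthogonal complement for the converse.

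First suppose the orthogonality fails, say $\langle\alpha_i,\beta\rangle=\delta\neq 0$ for some $i$ and some $b\in B$; I claim $\tp(a_i/C\cup\{b\})$ already divides over $C$, which by monotonicity of dividing forces $\tp(\bar a/C\cup B)$ to divide. Put $r=\|\beta\|>0$ and, using saturation, build a sequence $(b_j)_{j<\omega}$ with $b_0=b$, with $P_C(b_j)=P_C(b)$, and with the orthogonal parts $\beta_j=b_j-P_C(b_j)$ pairwise orthogonal of common norm $r$. This sequence is $C$-indiscernible, since every increasing finite tuple from it has the same projection data and the same diagonal Gram matrix over $C$. Now the type over $C\cup\{b_j\}$ determines the projection onto the subspace $\dcl(C\cup\{b_j\})=C\oplus\R\beta_j$, hence fixes the $\beta_j$-component, so a vector $x$ realizing the copy of $\tp(a_i/C\cup\{b\})$ attached to $b_j$ must satisfy $\langle x-P_C(x),\beta_j\rangle=\delta$. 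Since $a_i$ belongs to some ball $B_N(\H)$, any realization of the copies lies in that same ball, and Bessel's inequality $\sum_{j<m}\delta^2/r^2\leq\|x-P_C(x)\|^2\leq N^2$ shows that at most $N^2r^2/\delta^2$ of these conditions can hold simultaneously. Hence the copies are $k$-inconsistent for a suitable finite $k$, and the type divides.

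Conversely, assume $\alpha_i\perp\beta$ for all $i$ and all $b\in B$, equivalently $P_{\overline{\spane}(C\cup B)}(a_i)=P_C(a_i)$ for every $i$. Let $(B_j)_{j<\omega}$ be an arbitrary $C$-indiscernible sequence with $B_0=B$, and let $p_j$ be the copy of $\tp(\bar a/C\cup B)$ attached to $B_j$. Because the orthogonality is part of $\tp(\bar a,B/C)$, each $p_j$ asks exactly that $P_C(x_i)=P_C(a_i)$, that the Gram matrix of $(x_i-P_C(x_i))_i$ equal that of $(\alpha_i)_i$, and that $x_i-P_C(x_i)\perp b'-P_C(b')$ for every $b'\in B_j$. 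Set $W=\overline{\spane}\{\,b'-P_C(b'):b'\in B_j,\ j<\omega\,\}\subseteq C^{\perp}$. As $C\cup\bigcup_j B_j$ is small and $\H$ has orthogonal dimension at least $\kappa$, the subspace $(C\oplus W)^{\perp}$ is infinite-dimensional and therefore contains vectors $\alpha_1',\dots,\alpha_n'$ realizing the Gram matrix of $(\alpha_i)_i$. Putting $a_i'=P_C(a_i)+\alpha_i'$, so that $\|a_i'\|=\|a_i\|$ and $a_i'$ stays in the correct ball, yields a simultaneous realization of all the $p_j$, so $\bigcup_j p_j$ is consistent. Since the indiscernible sequence was arbitrary, $\tp(\bar a/C\cup B)$ does not divide over $C$.

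The delicate point, special to the continuous setting, is the forward direction: the union of the copies is satisfiable on every finite fragment, so dividing can only be extracted after restricting to the bounded sort in which $a_i$ lives. It is precisely the interaction of Bessel's inequality with the norm bound coming from that sort that produces genuine $k$-inconsistency; in ordinary first-order reasoning one would be tempted, incorrectly, to read off dividing from the inconsistency of the full $\omega$-type alone. Everything else reduces to the type-description recalled at the outset and to the definability of $P_C$, after which the two constructions are routine.
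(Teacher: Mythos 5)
Your argument is correct, and it is worth noting that the paper itself does not prove this proposition at all: it simply cites Corollary 2 and Lemma 8 of Berenstein--Buechler \cite{BB}, so your blind proof is a genuinely self-contained replacement. Both halves are sound. In the forward direction, your computation that any realization $x$ of the copy of $\tp(a_i/C\cup\{b\})$ at $b_j$ satisfies $\langle x-P_C(x),\beta_j\rangle=\delta$ is right (the type fixes $\langle x,b_j\rangle$ and $\langle x,c\rangle$ for $c\in C$, and $\beta_j\perp C$), the sequence $(b_j)$ with pairwise orthogonal parts of common norm is indeed $C$-indiscernible by the characterization of types in Hilbert spaces, and Bessel's inequality then gives honest $k$-inconsistency; the reduction to a single $a_i$ and single $b$ via monotonicity of dividing is legitimate. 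In the converse direction, the key observation that the hypothesis forces $P_{\dcl(C\cup B)}(a_i)=P_C(a_i)$, so that the type is generated by the projection data, the Gram matrix of the orthogonal parts, and the orthogonality to $b'-P_C(b')$, is exactly what makes the simultaneous realization $a_i'=P_C(a_i)+\alpha_i'$ with $\alpha_i'\in (C\oplus W)^{\perp}$ work; smallness of $C\cup\bigcup_j B_j$ guarantees such $\alpha_i'$ exist in the monster. This is essentially the standard argument (and, in substance, the one in the cited source).

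One inaccuracy in your closing commentary should be fixed, though it does not affect the proof: it is not true that ``the union of the copies is satisfiable on every finite fragment'' so that dividing only appears after invoking the sort bound. Each copy of the full type already pins the norm $\|x-P_C(x)\|=\|\alpha_i\|$, so fragments of size greater than $\|\alpha_i\|^2 r^2/\delta^2$ are outright unsatisfiable --- the family is $k$-inconsistent with $k$ determined by $\|\alpha_i\|$ itself, with no appeal to the ball $B_N(\H)$ needed. What is finitely satisfiable is only the bare system of inner-product conditions $\langle y,\beta_j\rangle=\delta$ with the norm left free (realized by $y=\sum_{j\in F}(\delta/r^2)\beta_j$, whose norm grows like $\sqrt{|F|}$); once the norm constraint from the type (or from the bounded sort) is included, inconsistency is immediate. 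So the ``delicate point'' you flag is a feature of the unbounded toy version, not of the multi-sorted framework of this paper.
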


\begin{proof}
Proved as Corollary 2 and Lemma 8 of \cite[pp. 81--82]{BB}.
\end{proof}

For $A,B,C\subset H$ small, we say that $A$ is independent from 
$B$ over $C$ if for all $n\geq 1$ and $\ba\in A^n$, $\tp(\ba/C\cup B)$ does 
not divide over $C$.

Under non-dividing independence, types over sets are stationary. In particular, the independence theorem
holds over sets, and we may refer to this property as $3$-existence. 
It is also important to point out that non-dividing is
\emph{trivial}, that is, for all sets $B,C$ and tuples
$(a_1,\dots,a_n)$ from $H$, $\tp(a_1,\dots,a_n/C\cup B)$ does
not divide over $C$ if and only if $\tp(a_i/B\cup C)$ does not
divide over $C$ for $i\leq n$.




\section{Random subspaces and beautiful pairs}

We now deal with the easiest situation: a Hilbert space with an
orthonormal projection operator onto a \emph{subspace}. Let
$L_p=L\cup\{P\}$ where $P$ is a new unary function and we consider
structures of the form $(\H,P)$, where $P\colon H\to H$ is a
projection into a subspace. Note that $P\colon B_n(H)\to B_n(H)$ and
that $P$ is determined by its action on $B_1(H)$. Recall that
projections are bounded linear operators, characterized by two
properties:

\begin{enumerate}
\item $P^2=P$
\item $P^*=P$
\end{enumerate}

The second condition means that for any $u,v\in H$, $\bra P(u),v
\ket=\bra u,P(v) \ket$. A projection  also satisfies,
for any $u,v\in H$, $\|P(u)-P(v)\|\leq \|u-v\|$. In particular, it is a
uniformly continuous map and its modulus of uniform continuity is $\Delta_P(\epsilon)=\epsilon$.

We start by showing that the class of Hilbert spaces with projections 
has the free amalgamation property:

\begin{lema} Let $(\H_0,P_0)\subset (\H_i,P_i)$ where $i=1,2$ and $H_1
  \ind_{H_0} H_2$ be (possibly finite dimensional) Hilbert spaces with
  projections. Then
  $H_3=\spane\{H_1,H_2\}$ is a Hilbert space and
  $P_3(v_3)=P_1(v_1)+P_2(v_2)$ is a well defined projection, where
  $v_3=v_1+v_2$ and $v_1\in H_1$, $v_2\in H_2$.
  \end{lema}

\begin{proof}
  It is clear that $H_3=\spane\{H_1\cup H_2\}$ is a Hilbert space
  containing
  $H_1$ and $H_2$. It remains to prove
that $P_3$ is a projection map and that it is well defined. We denote
by $Q_0$,
$Q_1$, $Q_2$ the projections onto the spaces $H_0$, $H_1$ and $H_2$ 
respectively. 

Since $H_0 \subset H_1$, we can write $H_1=H_0\oplus (H_1\cap H_0^\perp)$.
Similarly $H_2=H_0\oplus (H_2\cap H_0^\perp)$. Finally, since $H_1
  \ind_{H_0} H_2$,

$$ H_3=H_0\oplus (H_1\cap H_0^\perp) \oplus (H_2\cap H_0^\perp) $$


Let $v_3\in H_3$. Let $u_0=Q_0(v_3)$, $u_1=P_{H_0^\perp \cap
  H_1}(v_3)=Q_1(v_3)-u_0$,
$u_2=P_{H_0^\perp \cap H_2}(v_3)=Q_2(v_3)-u_0$. Then $v_3=u_0+u_1+u_2$. 

As $H_1\cap H_2=H_0$, we can write $v_3$ in many different ways as a
sum of elements in $H_1$ and $H_2$. Given one such expression,
$v_3=v_1+v_2$, with $v_1\in H_1$ and $v_2\in H_2$, it is easy to see that
$P_1(v_1)+P_2(v_2)=P_0(u_0)+P_1(u_1)+P_2(u_2)$, and thus prove that
$P_3$ is well defined.







\end{proof}

Let $T^P$ be the theory of Hilbert spaces with a projection. It is
axiomatized by the theory of Hilbert spaces together with the axioms
(1) and (2) that say that $P$ is a projection. 

Consider first the finite dimensional models. Given an $n$-dimensional
Hilbert space $\H_n$, there are only $n+1$ many pairs $(\H_n,P)$,
where $P$ is a projection, modulo isomorphism. They are classified by
the dimension of $P(H)$, which ranges from $0$ to $n$.

In order to characterize the existentially closed models of $T^P$,
note the following facts:

\begin{enumerate}
\item Let $(\H,P)$ be existentially closed, and $(\H_n,P_{n})$ be an
$n$-dimensional Hilbert space with an orthonormal projection with the
property that $P_{n}(\H_n)=\H_n$. Then $(\H,P)\oplus (\H_n,P_{n})$ is
an extension of $(\H,P)$ with $\dim([P\oplus
P_{n}](\H\oplus \H_n))\geq n$. Since $n$ can be chosen as big as we
want  and $(\H,P)$ is existentially closed, $\dim(P(H))=\infty$.
\item Let $(\H,P)$ be existentially closed, and $(\H_n,P_{0})$ be an
  $n$-dimensional Hilbert space with an orthonormal projection such
  that $P_{0}(H_n)=\{0\}$. Then $(\H,P)\oplus
  (\H_n,P_{0})$ is an extension of $(H,P)$ such that
  $\dim(([P\oplus P_{0}](H\oplus \H_n))^\perp)\geq n$. Since $n$ can
  be chosen as big as we want and $(\H,P)$ is existentially closed,
  $\dim(P(H)^\perp)=\infty$.
\end{enumerate}

The theory $T^P_\omega$ extending $T^P$, stating that $P_\omega$ is a
projection and that there are infinitely many pairwise orthonormal
vectors $v$ satisfying $P_{\omega}(v)=v$ and also infinitely many
pairwise orthonormal vectors
$u$ satisfying $P_{\omega}(u)=0$ gives an axiomatization for the the
model companion of $T^P$, which corresponds to the theory of beautiful
pairs of Hilbert spaces. We will now study some properties of
$T^P_\omega$.

Let $(\H,P)\models T^P_\omega$ and for any $v\in H$ let
$d_P(v)=\|v-P(v)\|$.
Then $d_P(v)$ measures the distance between $v$ and the subspace
$P(H)$. The distance function $d_P(x)$ is definable in $(\H,P)$. We
will now prove the converse, that is, that we can definably recover
$P$ from $d_P$.

\begin{lema}
  Let $(\H,P)\models T^P_\omega$. For any $v\in H_\omega$ let
  $d_P(v)=\|v-P(v)\|$. Then $P(v)\in dcl(v)$ in the structure
  $(\H,d_P)$.
\end{lema}

\begin{proof}
  Note that $P(v)$ is the unique element $x$ in $P(H)$ satisfying
  $\|v-x\|=d_P(v)$. Thus $P(v)$ is the unique realization of the
  condition $\vphi(x)=\max\{ d_P(x),|\|v-x\|-d_P(v)|\} =0$.
\end{proof}

\begin{prop}
  Let $(\H,P)\models T^P_\omega$. For any $v\in H_\omega$ let
  $d_P(v)=\|v-P(v)\|$. Then the projection function $P(x)$ is
  definable in the structure $(\H,d_P)$
\end{prop}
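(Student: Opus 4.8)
The plan is to upgrade the pointwise statement of the preceding lemma---which only asserts that, for each fixed $v$, the value $P(v)$ is the \emph{unique} zero of $\vphi(v,x)=\max\{d_P(x),|\,\|v-x\|-d_P(v)\,|\}$---into the statement that $x\mapsto P(x)$ is a \emph{definable function} in $(\H,d_P)$. Recall that in continuous logic a function $f$ is definable exactly when the predicate $y\mapsto\|y-f(x)\|$ is a definable predicate, and that the convenient sufficient criterion is a uniform one: if $\vphi(v,x)$ is a definable predicate whose unique zero in $x$ is $f(v)=P(v)$, then $f$ is definable provided that for every $\e>0$ there is $\delta>0$ such that $\vphi(v,x)\le\delta$ forces $\|x-P(v)\|\le\e$, uniformly in $v$ (on each sort $B_n(H)$). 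The previous lemma gives the uniqueness of the zero but not this uniformity, so the entire content of the proof lies in the $\e$--$\delta$ estimate.

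First I would set up the Hilbert-space geometry. Writing $a=v-P(v)$ and $b=x-P(x)$, both vectors lie in $P(H)^\perp$, while $P(v)-P(x)\in P(H)$; hence the orthogonal decomposition $v-x=(a-b)+(P(v)-P(x))$ together with the Pythagorean theorem gives
$$\|v-x\|^2=\|a-b\|^2+\|P(v)-P(x)\|^2.$$
Expanding $\|a-b\|^2=d_P(v)^2-2\bra a,b\ket+d_P(x)^2$ and substituting yields
$$\|P(v)-P(x)\|^2=\bigl(\|v-x\|^2-d_P(v)^2\bigr)+2\bra a,b\ket-d_P(x)^2.$$
Now assume $\vphi(v,x)\le\delta$, so that $d_P(x)\le\delta$ and $\bigl|\,\|v-x\|-d_P(v)\,\bigr|\le\delta$. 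Working on the ball $B_n(H)$, where $d_P(v)=\|v-P(v)\|\le\|v\|\le n$ (since $0\in P(H)$), each term on the right is controlled: $|\,\|v-x\|^2-d_P(v)^2\,|\le\delta(2n+\delta)$, $|2\bra a,b\ket|\le 2n\delta$ by Cauchy--Schwarz, and $d_P(x)^2\le\delta^2$. Therefore $\|P(v)-P(x)\|^2\le 4n\delta+2\delta^2$, and consequently
$$\|x-P(v)\|\le\|x-P(x)\|+\|P(x)-P(v)\|\le\delta+\sqrt{4n\delta+2\delta^2}.$$

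This is precisely the uniform bound required: given $\e>0$ and a fixed sort $B_n(H)$, choosing $\delta$ small enough that $\delta+\sqrt{4n\delta+2\delta^2}<\e$ guarantees $\vphi(v,x)\le\delta\Rightarrow\|x-P(v)\|<\e$ for all $v,x\in B_n(H)$. By the criterion above, the predicate $\|y-P(x)\|$ is then definable, so $P$ is a definable function. I expect the only genuine subtlety to be the dependence of $\delta$ on the sort: the estimate degrades as $n$ grows, because $d_P(v)$ is only bounded by $n$, but this is harmless since definability of a map between sorts is verified one pair of balls at a time. The algebraic identities are routine Hilbert-space computations; the real work is recognizing that it is \emph{uniformity}, and not the mere pointwise uniqueness from the lemma, that must be established, and then extracting it from the Pythagorean estimate.
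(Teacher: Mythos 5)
Your proof is correct, but it takes a genuinely different route from the paper. The paper's proof is soft: it passes to a $\kappa$-saturated model $(\mathcal{H},P)\models T^P_\omega$, observes that $(\mathcal{H},d_P)$ is still $\kappa$-saturated, notes that by the preceding lemma the graph $\mathcal{G}_P$ is type-definable in $(\mathcal{H},d_P)$, and then invokes the general fact \cite[Proposition 9.24]{BBHU} that in a sufficiently saturated structure a function with type-definable graph is definable. You instead extract the uniformity by hand: the Pythagorean identity $\|v-x\|^2=\|a-b\|^2+\|P(v)-P(x)\|^2$ with $a=v-P(v)$, $b=x-P(x)$, together with Cauchy--Schwarz, gives the explicit bound $\|x-P(v)\|\le\delta+\sqrt{4n\delta+2\delta^2}$ on $B_n(H)$ whenever $\varphi(v,x)\le\delta$, and your computations check out (including the points $d_P(v)\le\|v\|\le n$ since $0\in P(H)$, and that the witness $x=P(v)$ stays in the sort $B_n(H)$). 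The criterion you cite --- unique zero plus a uniform $\varepsilon$--$\delta$ collapse of approximate zeros forces definability of the predicate $\|y-P(x)\|$ --- is the standard zeroset-definability mechanism (essentially Theorem 9.12 of \cite{BBHU} applied to the graph, realized via a forced-limit expression such as $\inf_x(\|y-x\|+C_\delta\,\varphi(v,x))$), so it is fair to treat it as known, though a fully self-contained write-up would include that one-line construction. What each approach buys: the paper's argument is shorter and leans on general machinery, at the cost of a detour through a saturated model (saturation is exactly what substitutes for your uniformity, via compactness); your argument needs no saturation at all, works directly in every model, and produces an explicit modulus $\delta(\varepsilon,n)$ --- indeed, as your estimate never uses the axioms of $T^P_\omega$ beyond $P$ being a projection, it shows the slightly stronger statement that $P$ is definable from $d_P$ in any model of $T^P$. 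Your closing remark correctly identifies the crux: it is the uniformity, not the pointwise uniqueness in the lemma, that carries the proof, and the dependence of $\delta$ on the sort $B_n(H)$ is harmless since definability is checked sort by sort.
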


\begin{proof}
  Let $(\H,P)\models T^P_\omega$ be $\kappa$-saturated for
  $\kappa>\aleph_0$ and let
  $d_P(v)=\|v-P(v)\|$. Since $d_P$ is definable in the structure
  $(\H,P)$, the new structure $(\H,d_P)$ is still
  $\kappa$-saturated. Let
  $\mathcal{G}_P$ be the graph of the function $P$. Then by the
  previous lemma $\mathcal{G}_P$ is type-definable in $(\H,d_P)$ and
  thus by \cite[Proposition 9.24]{BBHU} $P$ is definable in the
  structure $(\H,d_P)$.
\end{proof}

\begin{nota}
  We write $\tp$ for $L$-types, $\tp_P$ for $L_P$-types and $\qftp_P$
  for quantifier free $L_P$-types. We write $\acl_P$ for the
  algebraic closure in the language $L_P$. We
  follow a similar convention for $\dcl_P$.
\end{nota}

\begin{lema}
$T_\omega^P$ has quantifier elimination.
\end{lema}

\begin{proof}
  It suffices to show that quantifier free $L_P$-types determine the
  $L_P$-types. Let $(\H,P)\models T_\omega^P$ and let $\bar a=(a_1,\dots,a_n),
  \bar b=(b_1,\dots,b_n)\in H^n$. Assume that $\qftp_P(\bar
  a)=\qftp_P(\bar b)$. Then
  \[\tp(P(a_1),\dots,P(a_n))=\tp(P(b_1),\dots,P(b_n))\]
  and
  \[
  \tp(a_1-P(a_1),\dots,a_n-P(a_n))=\tp(b_1-P(b_1),\dots,b_n-P(b_n)).\]
  Let $H_0=P(H)$ and
  let $H_1=H_0^\perp$, both are then infinite dimensional Hilbert spaces and
  $\H=\H_0 \oplus \H_1$. Let $f_0\in Aut(\H_0)$ satisfy
  $f_0(P(a_1),\dots,P(a_n))=(P(b_1),\dots,P(b_n))$ and let $f_1\in
  Aut(\H_1)$ be such that
  $f_1(a_1-P(a_1),\dots,a_n-P(a_n))=(b_1-P(b_1),\dots,b_n-P(b_n))$.
  Let $f$ be the automorphism of $\H$ induced by by $f_0$ and $f_1$,
  that is, $f=f_0\oplus f_1$. Then $f\in Aut(\H,P)$ and
  $f(a_1,\dots,a_n)=(b_1,\dots,b_n)$, so $\tp_P(\ba)=\tp_P(\bb)$.
\end{proof}

\noindent
{\bf Characterization of types:}
By the previous lemma, the $L_P$-type of an
$n$-tuple $\ba=(a_1,\dots,a_n)$ inside a structure $(\H,P)\models T_\omega^P$ is
determined by the type of its projections $\tp(P(a_1),\dots,P(a_n),
a_1-P(a_1),\dots,a_n-P(a_n))$. In particular, we may regard $(\H,P)$
as the direct sum of the two independent pure Hilbert spaces
$(P(H),+,0,\bra,\ket)$ and $(P(H)^\perp,+,0,\bra,\ket)$.

We may therefore characterize definable and algebraic closure, as
follows.

\begin{prop}
Let $(\H,P)\models T_\omega^P$ and let $A\subset H$. Then $\dcl_P(A)=\acl_P(A)=\dcl(A\cup P(A))$. 
\end{prop}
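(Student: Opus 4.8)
The plan is to establish the chain
\[ \dcl(A\cup P(A)) \subseteq \dcl_P(A) \subseteq \acl_P(A) \subseteq \dcl(A\cup P(A)), \]
so that all three sets coincide. Write $V=\dcl(A\cup P(A))$ for the smallest Hilbert subspace containing $A\cup P(A)$ (Fact \ref{dclh}). First I would check that $V$ is $P$-invariant: the generating set $A\cup P(A)$ is sent into $V$ by $P$, since $P(a)\in P(A)\subseteq V$ for $a\in A$ and $P(P(a))=P(a)$ by idempotency, and as $P$ is a bounded linear operator it preserves the closed linear span. Thus $V$ is a substructure of $(\H,P)$ in the language $L_P$.

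For the first inclusion, $\dcl_P(A)$ is closed under the $L$-operations and under $P$ (all function symbols of $L_P$) and is topologically closed; hence it is a $P$-invariant Hilbert subspace containing $A\cup P(A)$, and so it contains the smallest such space, namely $V$. The middle inclusion $\dcl_P(A)\subseteq\acl_P(A)$ is automatic.

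The real work is the last inclusion $\acl_P(A)\subseteq V$, for which I would work inside a monster model and exploit the direct sum decomposition $\H=H_0\oplus H_1$ with $H_0=P(H)$ and $H_1=P(H)^\perp$, together with the fact (from the characterization of types above) that $(\H,P)$ behaves as the independent sum of the two pure Hilbert spaces $H_0$ and $H_1$, both infinite dimensional by the axioms of $T_\omega^P$. Since $V$ is a $P$-invariant closed subspace it splits as $V=V_0\oplus V_1$ with $V_0=V\cap H_0=\overline{\spane}(P(A))$ and $V_1=V\cap H_1=\overline{\spane}\{a-P(a):a\in A\}$. Given $b\notin V$, either $P(b)\notin V_0$ or $b-P(b)\notin V_1$. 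In the first case I would write $P(b)=z_1+z_2$ with $z_1=P_{V_0}(P(b))\in V_0$ and $0\neq z_2\perp V_0$, and use that $V_0^\perp\cap H_0$ is infinite dimensional to produce, via unitary rotations $h$ of $H_0$ fixing $V_0$ pointwise, infinitely many images of $z_2$ pairwise at distance $\geq\sqrt{2}\,\|z_2\|$. Each map $h\oplus\id_{H_1}$ is an $L_P$-automorphism: it commutes with $P$ and fixes $V=V_0\oplus V_1$ pointwise, hence fixes $A$. These carry $b$ to infinitely many conjugates over $A$ that remain pairwise far apart, so $b\notin\acl_P(A)$. The case $b-P(b)\notin V_1$ is symmetric, using rotations $\id_{H_0}\oplus g$ of $H_1$.

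The main obstacle is this last step: one must verify that the constructed maps are genuine automorphisms of $(\H,P)$ fixing $A$, and that the resulting orbit of $b$ fails to be totally bounded, which is the metric characterization of non-algebraicity in continuous logic. The key leverage is that the infinite dimensionality of $H_0$ and $H_1$ --- guaranteed by the axioms of $T_\omega^P$ --- leaves infinitely many orthogonal directions in $V_0^\perp\cap H_0$ and $V_1^\perp\cap H_1$ along which to move the offending component of $b$.
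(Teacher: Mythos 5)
Your proof is correct and is exactly the argument the paper intends: the paper states this proposition with ``We leave the proof to the reader,'' immediately after its characterization of types via the decomposition $\H=P(H)\oplus P(H)^\perp$ into two independent pure Hilbert spaces, which is precisely the decomposition your argument exploits. One small point to state more carefully: the infinite dimensionality of $V_0^\perp\cap H_0$ and $V_1^\perp\cap H_1$ does not follow from the axioms of $T^P_\omega$ alone (a large $A$ could exhaust those directions in a given model), but rather from your monster-model framing, where $A$ is small relative to the saturation and $\acl_P$ is unchanged under elementary extension.
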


We leave the proof to the reader. Another consequence of the
description of types is:

\begin{prop}
The theory $T_\omega^P$ is $\omega$-stable.
\end{prop}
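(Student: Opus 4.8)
The plan is to show $\omega$-stability by counting types over a separable parameter set. The key structural input is the characterization of types already established: the $L_P$-type of a tuple $\ba=(a_1,\dots,a_n)$ inside a model $(\H,P)\models T_\omega^P$ is completely determined by the pure $L$-type of the longer tuple $(P(a_1),\dots,P(a_n),a_1-P(a_1),\dots,a_n-P(a_n))$. Since $(\H,P)$ decomposes as the direct sum of the two mutually orthogonal (hence independent) pure Hilbert spaces $P(H)$ and $P(H)^\perp$, counting $L_P$-types over a parameter set $A$ reduces to counting pure $L$-types of the projected tuples over the set $A\cup P(A)$.

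First I would recall that $\omega$-stability in continuous logic means that over any separable parameter set $A$ the metric space of complete types $(S_n(A),d)$ is separable; equivalently one counts types up to the natural metric and checks there are only countably many to within any prescribed positive error, or directly that the density character of $S_n(A)$ is $\aleph_0$. The pure theory $T$ of Hilbert spaces is $\omega$-stable (this is standard, see \cite[Chapter 15]{BBHU} or \cite{BB}; it follows from stationarity of types under the non-dividing independence recalled in \prettyref{prop:independence}), so the space of $L$-types over any separable set is separable. Given a separable $A\subset H$, the set $A\cup P(A)$ is again separable, and by \prettyref{prop:independence}'s characterization of independence the two coordinates $P(a_i)$ and $a_i-P(a_i)$ live in the orthogonal components $P(H)$ and $P(H)^\perp$, so the $L$-type of the combined $2n$-tuple splits as a pair of $L$-types, one in each component.

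The main step is then to invoke the type characterization: the map sending an $L_P$-type $\tp_P(\ba/A)$ to the pair of pure $L$-types of $(P(a_1),\dots,P(a_n))$ over $P(H)\cap\dcl(A\cup P(A))$ and of $(a_1-P(a_1),\dots,a_n-P(a_n))$ over $P(H)^\perp\cap\dcl(A\cup P(A))$ is a metric embedding (for the natural sup-type metric) of $S_n^{L_P}(A)$ into a product of two pure $L$-type spaces over separable parameter sets. Since each factor is separable by $\omega$-stability of $T$, and a finite product of separable metric spaces is separable, the space $S_n^{L_P}(A)$ is separable. Hence $T_\omega^P$ is $\omega$-stable.

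The step I expect to require the most care is verifying that this correspondence between $L_P$-types and pairs of pure $L$-types is genuinely an isometry (or at least a uniformly continuous bijection that does not collapse the density character), rather than merely a bijection on type sets: one must check that the $L_P$-metric on $S_n^{L_P}(A)$ is controlled by the $L$-metrics on the two projected type spaces, using that $P$ is $1$-Lipschitz (its modulus of uniform continuity is $\Delta_P(\e)=\e$) so that closeness of the projected tuples in the $L$-metric yields closeness of the original tuples in the $L_P$-metric. This Lipschitz control is exactly what makes the reduction quantitative and hence preserves separability, and it is the only point where a small computation is needed; everything else is bookkeeping on top of the already-established quantifier elimination and type characterization for $T_\omega^P$.
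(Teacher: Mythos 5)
Your proof is correct, but it takes a genuinely different route from the paper's. The paper argues directly with a single separable model: given countable $A$, replace $(\H,P)$ by $(\H,P)\oplus(\H,P)$ so that both $P(H)\cap \dcl_P(A)^\perp$ and $P(H)^\perp\cap \dcl_P(A)^\perp$ are infinite dimensional; then every $L_P$-type over $A$ is already realized in this separable structure, so $(S_1(A),d)$ is separable --- a two-line saturation argument. You instead transfer $\omega$-stability from the pure theory $T$: using quantifier elimination and the characterization of types, you split $\tp_P(\ba/A)$ into a pair of pure $L$-types over the two components of $\dcl_P(A)=\dcl(A\cup P(A))$ inside $P(H)$ and $P(H)^\perp$, and check that the resulting map into a product of two separable pure type spaces is injective and bi-uniformly continuous. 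Your metric caveat is exactly the right point and it resolves correctly: the map is $1$-Lipschitz because $P$ and $I-P$ are contractions, it is not an isometry, and the inverse is uniformly continuous by the Pythagorean identity $\|a-b\|^2=\|P(a)-P(b)\|^2+\|(a-P(a))-(b-P(b))\|^2$ together with the fact that realizations of the two component types can be chosen independently in the orthogonal summands --- this independence (the direct-sum description of models of $T_\omega^P$) is also what makes the map well defined and injective, so separability passes back from the product. Comparing what each approach buys: the paper's argument is shorter and in passing establishes a stronger saturation-type fact (after doubling, a separable model realizes all types over any countable subset), while yours isolates the quantitative content and shows that separability of type spaces is inherited from the pure theory through the orthogonal decomposition, an argument that would transfer essentially verbatim to other free direct-sum expansions. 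One small looseness worth flagging: $\omega$-stability of pure Hilbert spaces does not quite ``follow from stationarity'' alone --- the standard proof uses the explicit description of pure types (the projection onto $\dcl(A)$ together with the norm of the orthogonal component) --- but the fact itself is standard and citable, so this does not affect the validity of your argument.
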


\begin{proof}
  Let $(\H,P)\models T_\omega^P$ be separable and let $A\subset H$ be
  countable. Replacing $(\H,P)$ for $(\H,P)\oplus (\H,P)$ if necessary, we
  may assume that
  
  $P(H)\cap dcl_P(A)^\perp$ is infinite dimensional and that $P(H)^\perp \cap
  dcl_P(A)^\perp$ is infinite dimensional. Thus every $L_p$-type over $A$
  is realized in the structure $(\H,P)$ and $(S_1(A),d)$ is separable.
\end{proof}

\begin{prop}
  Let $(\H,P)\models T_\omega^P$ be a $\kappa$-saturated domain and let $A,B,C\subset H$ be
  small. Then $\tp_P(A/B\cup C)$ does not fork over $C$ if and only if
  $\tp(A\cup P(A)/B\cup P(B) \cup C\cup P(C))$ does not fork over $C\cup P(C)$.
\end{prop}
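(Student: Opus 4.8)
The plan is to reduce forking in the expanded structure $(\H,P)\models T_\omega^P$ to forking in the underlying pure Hilbert space, using the characterization of types established just above. The key structural fact is that $(\H,P)$ decomposes as the orthogonal direct sum of two independent pure Hilbert spaces $H_0 = P(H)$ and $H_1 = P(H)^\perp$, and that by quantifier elimination the $L_P$-type of a tuple $\ba$ is governed by the $L$-type of $(P(\ba), \ba - P(\ba))$, equivalently by $\tp(\ba \cup P(\ba))$ together with the information that the relevant pieces live in $H_0$ and $H_1$ respectively. So the natural strategy is to pass from an arbitrary set $A$ to the set $A \cup P(A)$, on which the projection acts transparently, and then invoke \prettyref{prop:independence} (the characterization of non-dividing in pure Hilbert spaces via the perpendicularity condition $a_i - P_C(a_i) \perp b - P_C(b)$).

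First I would fix notation and reduce to the case where the base is $L_P$-closed: since by the preceding proposition $\dcl_P(C) = \dcl(C \cup P(C))$, and forking does not depend on naming elements of the definable closure, I may replace $C$ by $C \cup P(C)$ throughout and assume $C = \dcl_P(C)$, so that $C = C_0 \oplus C_1$ splits along the decomposition $H = H_0 \oplus H_1$. The same reduction applies to $B$. Now both directions follow a common template. For the forward direction, suppose $\tp_P(A/B \cup C)$ forks over $C$; since $T_\omega^P$ is stable (being $\omega$-stable), forking equals dividing, and by quantifier elimination a dividing formula in $L_P$ over $B \cup C$ translates into an $L$-condition on the tuple $A \cup P(A)$ over $B \cup P(B) \cup C \cup P(C)$ that witnesses dividing there. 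Because $P$ is definable in $(\H, d_P)$ and $d_P$ is definable in $(\H,P)$, the map $A \mapsto A \cup P(A)$ is type-preserving in the precise sense needed, so dividing is transported faithfully in both directions.

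For the backward direction I would argue contrapositively in the same spirit: if $\tp_P(A/B\cup C)$ does fork over $C$, then by the triviality and stationarity of non-dividing recorded at the end of Section~\ref{modthH} it suffices to check perpendicularity element by element, and the splitting $H = H_0 \oplus H_1$ lets me treat the $H_0$-components and $H_1$-components separately. Concretely, writing $a = P(a) + (a - P(a))$ and using that projections onto $C$ respect the decomposition, the condition that $\tp(A \cup P(A)/B \cup P(B) \cup C \cup P(C))$ forks over $C \cup P(C)$ becomes, by \prettyref{prop:independence}, a failure of orthogonality $a_i - P_{C\cup P(C)}(a_i) \not\perp b - P_{C \cup P(C)}(b)$ for some components; this same orthogonality failure is exactly what an $L_P$-dividing formula detects, so the two notions coincide.

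The main obstacle I expect is bookkeeping the interaction between the $L$-projection $P_{C \cup P(C)}$ appearing in \prettyref{prop:independence} and the structural projection $P$ of the pair: one must check that projecting onto the Hilbert subspace $\dcl(C \cup P(C))$ commutes appropriately with the orthogonal splitting into $H_0$ and $H_1$, so that the perpendicularity condition computed in the pure Hilbert space genuinely captures $L_P$-non-dividing rather than a coarser or finer relation. Making this compatibility precise---verifying that $C = \dcl_P(C)$ yields $C_0 = C \cap H_0$, $C_1 = C \cap H_1$ with $P_C = P_{C_0} \oplus P_{C_1}$---is the technical heart of the argument, and once it is in place both implications reduce to a direct application of the pure-Hilbert-space criterion.
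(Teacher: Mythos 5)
The paper itself gives no proof here (it says only ``Again the proof is straightforward''), so the comparison is with the intended routine argument, and your overall strategy is exactly that argument: replace $C$ by $\dcl_P(C)=\dcl(C\cup P(C))$, split everything along $H=P(H)\oplus P(H)^\perp$, observe that a $P$-closed subspace $C$ satisfies $C=(C\cap P(H))\oplus(C\cap P(H)^\perp)$ with $P_C=P_{C_0}\oplus P_{C_1}$, and reduce to the perpendicularity characterization of non-dividing in pure Hilbert spaces (Proposition \ref{independence}). You correctly identify this compatibility as the bookkeeping heart, and that part of your sketch is sound.

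There is, however, a genuine gap in the step ``dividing is transported faithfully in both directions'' via formula translation. A quantifier-free $L_P$-formula $\varphi(\bar x,\bar d)$ does rewrite as an $L$-formula $\psi(\bar x,\bar y,\bar d,P(\bar d))$ with $\bar y$ standing for $P(\bar x)$, but in the reduct nothing forces a solution to satisfy $\bar y=P(\bar x)$: membership in $P(H)$ is not $L$-definable over a small parameter set, so an $L_P$-inconsistent family $\{\varphi(\bar x,\bar d_i)\}$ can become consistent after translation. Inconsistency transfers only from $L$ to $L_P$ (an $L_P$-realization $\bar x$ yields the $L$-realization $(\bar x,P(\bar x))$), so the direction from $L_P$-forking to $L$-forking cannot be done syntactically, and your closing assertion that ``this same orthogonality failure is exactly what an $L_P$-dividing formula detects'' is precisely the content of the proposition, asserted rather than proved. (Note also that both of your direction-paragraphs begin from the same hypothesis, that $\tp_P(A/B\cup C)$ forks over $C$, so as written only one implication is actually addressed.) The standard repair is either to verify that the relation ``the components are orthogonal after projecting onto $\dcl_P(C)$'' satisfies invariance, symmetry, transitivity, finite and local character, extension, and stationarity for $L_P$-types --- stationarity coming from the characterization of types together with stationarity of Hilbert-space types over sets --- and then invoke uniqueness of such a relation in a stable theory, or to argue with Morley sequences: since types over sets are stationary in both $T$ and $T^P_\omega$, dividing is witnessed by Morley sequences, and a Morley sequence in $\tp_P(\bar b/C)$ expands to an $L$-Morley sequence in $\tp(\bar b\, P(\bar b)/C\cup P(C))$ and conversely, which yields both implications at once.
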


Again the proof is straightforward.

\section{Continuous random predicates}

We now come to our main theory and to our first set of results.
We study the expansion of a Hilbert space with a distance function
to a subset of $H$. Let $d_N$ be a new unary predicate and let $L_N$
be the language of Hilbert spaces together with $d_N$. We denote the
$L_N$ structures by $(\H,d_N)$, where $d_N\colon \H\to [0,1]$ and we want to
consider the structures where $d_N$ is a distance to a subset of $H$.
Instead of measuring the actual distance to the subset, we truncate
the distance at one. We start by characterizing the functions
$d_N$ corresponding to distances.

\subsection{The basic theory $T_0$}

We denote by $T_0$ the theory of
Hilbert spaces together with the next two axioms (compare with Theorem 9.11 in \cite{BBHU}):

\begin{enumerate}
\item $\sup_x \min\{1\dotminus d_N(x),\inf_y \max\{|d_N(x)-\|x-y\||,d_N(y)\}\}=0$
\item $\sup_x \sup_y [d_N(y)-\|x-y\|-d_N(x)]\leq 0$
\end{enumerate}

We say a point is \emph{black} if $d_N(x)=0$ and \emph{white} if
$d_N(x)=1$. All other points are gray, darker if $d(x)$ is close to
zero and whiter if $d_N(x)$ is close to one. This terminology follows
\cite{Po}. From the second axiom we get that $d_N$ is uniformly
continuous (with modulus of uniform continuity $\Delta(\epsilon)=\epsilon$). Thus we
can apply the tools of continuous model theory to analyze these
structures.

\begin{lema}
  Let $(\H,d)\models T_0$ be $\aleph_0$-saturated and let $N=\{x\in H:
  d_N(x)=0\}$. Then for any $x\in H$, $d_N(x)=\dist(x,N)$.
\end{lema}

\begin{proof}
  Let $v\in H$ and let $w\in N$. Then by the second axiom $d_N(v)\leq \|v-w\|$
  and thus $d_N(v)\leq \dist(v,N)$.

  Now let $v\in H$. If $d_N(v)=1$ there is nothing to prove, so we may
  assume that $d_N(v)<1$. Consider now the set of statements $p(x)$
  given by $d_N(x)=0$, $\|x-v\|=d_N(v)$.

\textbf{Claim} The type $p(x)$ is approximately satisfiable.
  
Let $\varepsilon>0$. We want to show that there is a realization of the
statements $d_N(x)\leq \varepsilon$, $d_N(v)\leq \|x-v\|+\varepsilon$. By the first axiom there is
$w$ such that $d_N(w)\leq \varepsilon$ and $d_N(v)\leq \|v-w\|+\varepsilon$.

Since $(\H,d)$ is $\aleph_0$-saturated, there is $w\in N$ such that
$\|v-w\|=d_N(v)$ as we wanted.
\end{proof}

There are several ages that need to be considered. We fix $r\in [0,1)$
and we consider the class ${\mathcal K}_r$ of all models of $T_0$ such
that $d_N(0)=r$. Note that in all finite dimensional spaces in
${\mathcal K}_r$ we have at least a point $v$ with $d_N(v)=0$.

\begin{nota}
  If $(\H_i,d_N^i)\models T_0$ for $i\in\{0,1\}$, we write $(H_0,d_N^0)\subset
  (H_1,d_N^1)$ if $H_0\subset H_1$ and $d_N^0=d_N^1\upharpoonright_{H_0}$
  (for each sort).
\end{nota}

We will work in ${\mathcal K}_r$. We start with constructing free amalgamations:

\begin{lema} Let $(\H_0,d_N^0)\subset (\H_i,d_N^i)$ where $i=1,2$ and
  $H_1 \ind_{H_0} H_2$ be Hilbert spaces with distance functions, all
  of them in ${\mathcal K}_r$. Let $H_3=\spane\{H_1,H_2\}$ and let
  \[d_N^3(v)=\min\Big\{ \sqrt{d_N^1(P_{H_1}(v))^2+\|P_{H_2\cap H_0^
      \perp}(v)\|^2},\qquad \qquad \qquad \qquad\]
  \[ \qquad \qquad \qquad \qquad \qquad \qquad
  \sqrt{d_N^2(P_{H_2}(v))^2+\|P_{H_1\cap H_0^\perp}(v)\|^2}\Big\}.\] Then
  $(\H_i,d_N^i)\subset (\H_3,d_N^3)$ for $i=1,2$, and $(\H_3,d_N^3)\in
  {\mathcal K}_r$.
\end{lema}

\begin{proof}
  For arbitrary $v\in H_1$, $\sqrt{d_N^1(P_{H_1}(v))^2+\|P_{H_2\cap H_0^
      \perp}(v)\|^2}=d_N^1(v)$. Since $(\H_0,d_N^0)\subset (\H_i,d_N^i)$ we 
      also have\\
  $\sqrt{d_N^2(P_{H_2}(v))^2+\|P_{H_1\cap H_0^
      \perp}(v)\|^2}=\sqrt{d_N^0(P_{H_0}(v))^2+\|P_{H_1\cap H_0^ \perp}(v)\|^2}\geq
  d_N^1(v)$. Similarly, for any $v\in H_2$, $d_N^3(v)=d_N^2(v)$.

  Therefore $(\H_3,d_N^3)\supset (\H_i,d_N^i)$ for $i\in \{1,2\}$. Now we have to
  prove that the function $d_N^3$ that we defined is indeed a distance
  function.

  Geometrically, $d_N^3(v)$ takes the minimum of the distances of $v$
  to the selected black subsets of $H_1$ and $H_2$. That is, the
  random subset of the amalgamation of $(H_1,d_N^1)$ and $(H_2,d_N^2)$ is
  the union of the two random subsets. It is easy to check that
  $(\H_3,d_N^3)\models T_0$. Since each of $(H_1,d_N^1)$, $(H_2,d_N^2)$ belongs
  to ${\mathcal K}_r$, we have $d_N^1(0)=r=d_N^2(0)$ and thus $d_N^3(0)=r$.
\end{proof}

The class ${\mathcal K}_0$ also has the JEP: let
$(\H_1,d_N^1),(\H_2,d_N^2)$ belong to ${\mathcal K}_0$ and assume that $\H_1\perp
\H_2$. Let $N_1=\{v\in H_1:d_N^1(v)=0\}$ and let $N_2=\{v\in H_2:d_N^2(v)=0\}$. Let $\H_3=\spane( \H_1\cup \H_2)$  and let $N_3=N_1\cup N_2\subset H_3$ and finally, let $d_N^3(v)=\dist(v,N_3)$. Then $(H_3,d_N^3)$ is a witnesses of the JEP in ${\mathcal K}_0$.

\begin{lema}
  There is a model $(\H,d_N)\models T_0$ such that $\H$ is a
  $2n$-dimensional Hilbert space and there are orthonormal vectors
  $v_1,\dots,v_n\in H$, $u_1,\dots,u_n\in H$ such that
  $d_N((u_i+v_j)/2)=\sqrt{2}/2$ for $i\leq j$, $d_N(0)=0$ and
  $d_N((u_i+v_j)/2)= 0$ for $i>j$.
\end{lema}

\begin{proof}
  Let $H$ be a Hilbert space of dimension $2n$, and fix some
  orthonormal basis $\langle v_1,\dots,v_n,u_1,\dots,u_n\rangle$ for $H$. Let
  $N=\{(u_i+v_j)/2: i>j\}\cup \{0\}$ and let $d_N(x)=\dist(x,N)$. Then $d_N(0)=0$ and
  $d_N((u_i+v_j)/2)= 0$ for $i>j$. Since
  $\|(u_i+v_j)/2-(u_k+v_j)/2\|=\sqrt{2}/2$ for $i\neq k$ and
  $\|(u_i+v_j)/2-0\|=\sqrt{2}/2$, we get that
  $d_N(u_i+v_j)=\sqrt{2}/2$ for $i\leq j$
\end{proof}

A similar construction can be made in order to get the Lemma with
$d_N(0)=r$ for any $r\in [0,1]$.
In particular, if we fix an infinite cardinal $\kappa$ and we amalgamate
all possible pairs $(H,d)$ in ${\mathcal K}_r$ for $\dim(H)\leq \kappa$, the theory of
the resulting structure will be unstable.

\subsection{The model companion}\label{modelcomp}

\subsubsection{Basic notations}

We now provide the axioms of the model companion of $T_0\cup\{d_N(0)=0\}$.

Call $T_{d0}$ the theory of the structure built out of amalgamating
all separable Hilbert spaces together with a distance function
belonging to the age ${\mathcal K}_0$. Informally speaking,
$T_{d0}=Th(\varinjlim ({\mathcal K}_0))$. We show how to
axiomatize $T_{d0}$.

The idea for the axiomatization of this part (unlike our third
example, in next section) follows the lines of Theorem 2.4 of
\cite{CP}. There are however important differences in the behavior of
algebraic closures and independence, due to the metric
character of our examples.


Let $(M,d_N)$ in ${\mathcal K}_0$ be an existentially closed structure
and take some extension $(M_1,d_N)\supset (M,d_N)$. Let $\bar x=(x_1,\dots,x_{n+k})$ be
elements in $M_1\setminus M$ and let $z_1,\dots,z_{n+k}$ be their projections
on $M$. Assume that for $i\leq n$ there are
$\bar y=(y_1,\dots,y_{n})$ in $M_1\setminus M$ that satisfy
$d_N(x_i)=\|x_i-y_i\|$ and $d_N(y_i)=0$. Also assume that for $i>n$,
the witnesses for the distances to the black points belong to $M$,
that is, $d_N^2(x_i)=\|x_i-z_i\|^2+d_N^2(z_i)$ for $i>n$. Also, let us assume
that all points in $\bar x,\bar y$ live in a ball of radius $L$ around
the origin. Let $\bar u=(u_1,\dots,u_{n})$ be the projection of
$\bar y=(y_1,\dots,y_n)$ over $M$.

Let $\vphi(\bar x,\bar y,\bar z,\bar u)$ be a formula such that
$\vphi(\bar x,\bar y,\bar z,\bar u)=0$ describes the values of the
inner products between all the elements of the tuples, that is, it
determines the (Hilbert space) geometric locus of the tuple $(\bar
x,\bar y,\bar z,\bar u)$. The statement
$\vphi(\bar x,\bar y,\bar z,\bar u)=0$ expresses the position of the
potentially new points $\bar x$, $\bar y$ with respect to their
projections into a model. Since $d_N(x_i)=\|x_i-y_i\|$ and $d_N(y_i)=0$,
we have $\|x_i-y_j\|\geq \|x_i-y_i\|$ for $j\leq n$, $i\leq n$. Also, for $i>n$,
$d_N^2(x_i)=\|x_i-z_i\|^2+d_N^2(z_i)$, and get
$\|x_i-y_j\|^2\geq \|x_i-z_i\|^2+d_N^2(z_i)$ for $j\leq n$.

 Note that as $(M_1,d_N)\supset (M,d_N)$, for all $z\in M$,
$d_N^2(z)\leq \|z-y_i\|^2=\|z-u_i\|^2+\|y_i-u_i\|^2$ for $i\leq n$. We may also assume
that there is a positive real $\eta_\varphi$ such that $\|x_i-z_i\|\geq \eta_\varphi$ for $i\leq n+k$ and
$\|y_i-u_i\|\geq \eta_\varphi$ for $i\leq n$.

\subsubsection{An informal description of the axioms}

We want to express that for any parameters $\bar z$, $\bar u$ in the
structure\\
\underline{if} we can find realizations $\bar x$, $\bar y$ of
$\varphi(\bar x,\bar y,\bar z,\bar u)=0$ such that for all $w$ and $i\leq
n$, $d_N^2(w)\leq \|w-u_i\|^2+\|u_i-y_i\|^2$, $\|x_i-y_i\|^2\leq
\|x_i-z_i\|^2+d_N^2(z_i)$ for $i\leq n$, $\|x_i-y_j\|^2\geq
\|x_i-z_j\|^2+d_N^2(z_j)$ for $i>n$ and $j\leq n$,\\
\underline{then} there are tuples $\bar x'$,
$\bar y'$ such that $\vphi(\bar x',\bar y',\bar z,\bar u)=0$,
$d_N(y_i')=0$, $d_N(x_i')=\|x_i'-y_i'\|$ for $i\leq n$
and $d_N^2(x_j)=\|x_j-z_j\|^2+d_N^2(z_j)$ for $j>n$.

That is, for any $\bar z$, $\bar u$ in the
structure, if we can find realizations $\bar x$, $\bar y$ of the
Hilbert space locus given by $\varphi$, and we prescribe ``distances'' $d_N$
that do not clash with the $d_N$ information we already had, in such a
way that for $i\leq n$, the $y_i$'s are black and are witnesses for the
distance to the black set for the $x_i$'s, and for $i>n$ the $x_i$'s do not require new witnesses, then
we can actually find arbitrarily close realizations, {\it with the
  prescribed distances}.

The only problem with this idea is that we do not have an implication
in continuous logic. We replace the expression ``$p\to q$'' by a sequence
of approximations indexed by $\varepsilon$.

\subsubsection{The axioms of $T_N$}

\begin{nota}
  Let $\bz,\bu$ be tuples in $M$ and let $x\in M_1$. By $P_{\spane(\bz
    \bu)}(x)$ we mean the projection of $x$ in the space spanned by
  $(\bz,\bu)$.
\end{nota}

For fixed $\varepsilon\in (0,1)$, let $f\colon[0,1]\to [0,1]$ be a continuous function such that
whenever $\vphi(\bar t)<f(\varepsilon)$ and $\vphi(\bar t')=0$, then

\begin{description}
\item[(a)] $\|P_{\spane(\bar z \bar u)}(x_i)-z_i\|<\varepsilon$.
\item[(b)]  $\|P_{\spane(\bar z \bar u)}(y_i)-u_i\|<\varepsilon$.  
\item[(c)] $|\|t_i-t_j\|-\|t_i'-t_j'\||<\varepsilon$ where $\bar t$ is
the concatenation of $\bar x,\bar y,\bar z,\bar u$.
\end{description} 

Choosing $\varepsilon$ small enough, we may assume that 

\begin{description}
\item[(d)] $\|x_i-P_{\spane (\bz \bu)}(x_i)\|\geq \eta_\varphi/2$ for $i\leq n+k$.
\item[(e)] $\|y_i-P_{\spane (\bz \bu)}(y_i)\|\geq \eta_\varphi/2$ for $i\leq n$. 
\end{description}

Let $\delta=2\sqrt{\varepsilon(L+2)}$ and consider the following
axiom $\psi_{\varphi,\varepsilon}$ (which we write as a positive bounded formula for
clarity) where the quantifiers range over a ball of radius $L+1$:

\noindent
$\forall \bar z \forall \bar u \Big(\forall \bar x \forall \bar y \vphi(\bar x,\bar y,\bar
z,\bar u)\geq f(\varepsilon) \lor \exists w \bigvee_{i\leq n}(d_N^2(w)\geq \|w-u_i\|^2+\|y_i-u_i\|^2+\varepsilon^2 )\lor
\bigvee_{i>n,j\leq n} (\|x_i-y_j\|^2\leq \|x_i-z_i\|^2+d_N^2(z_i)+\varepsilon^2)\lor \\ 
\bigvee_{i,j\leq n,j\neq i} (\|x_i-y_j\|\leq \|x_i-y_i\|-\varepsilon) \lor
\bigvee_{i\leq n}(\|x_i-z_i\|^2+d_N^2(z_i)\leq \|x_i-y_i\|^2-\varepsilon^2)$ 
\[\lor \]
$\lor \exists \bar x \exists \bar y \big{[}
(\vphi(\bar x,\bar y,\bar z,\bar u)\leq f(\varepsilon) \land \bigwedge_{i\leq n} d_N(y_i)\leq \delta)
\land\bigwedge_{i\leq n} |d_N(x_i)-\|x_i-y_i\||\leq 2\delta) \land \bigwedge_{i>n}
|d_N^2(x_i)-\|x_i-z_i\|^2-d_N^2(z_i)|\leq 4 \delta L \big{]}\Big)$

Let $T_N$ be the theory $T_0$ together with this scheme of axioms
$\psi_{\varphi,\varepsilon}$ indexed by all Hilbert space geometric locus
formulas $\vphi(\bar
x,\bar y,\bar z,\bar u)=0$ and $\varepsilon\in(0,1)\cap {\Bbb Q}$. The radius of the
ball that contains all elements, $L$, as well as $n$ and $k$ are
determined from the configuration of points described by the formula $\vphi(\bar
x,\bar y,\bar z,\bar u)=0$.

\subsubsection{Existentially closed models of $T_0$}

\begin{theo}
  Assume that $(M,d_N)\models T_0$ is existentially closed. Then $(M,d_N)\models
  T_N$.
\end{theo}

\begin{proof}
  Fix $\varepsilon>0$ and $\varphi$ as above. Let $\bar z\in M^{n+k}$, $\bar u\in
  M^{n}$ and assume that there are $\bar x$, $\bar y$ with $\vphi(\bar
  x,\bar y,\bar z,\bar u)< f(\varepsilon)$ and $d_N^2(w)<
  \|w-u_i\|^2+\|y_i-u_i\|^2+\varepsilon^2$ for all $w\in M$, $\|x_i-y_i\|^2<
  \|x_i-z_i\|^2+d_N^2(z_i)+\varepsilon^2$ for $i\leq n$, 
  $\|x_i-y_j\|> \|x_i-y_i\|-\varepsilon$ for $i,j\leq n$, $i\neq j$,
  $\|x_i-y_j\|^2 >\|x_i-z_i\|^2+d_N^2(z_j)+\varepsilon^2$ for $i>n$, $j\leq n$. Let $\varepsilon'<\varepsilon$ be such that $\vphi(\bar
  x,\bar y,\bar z,\bar u)< f(\varepsilon')$ and 
  \begin{description}
  \item[(f)] $d_N^2(w)< \|w-u_i\|^2+\|y_i-u_i\|^2+\varepsilon'^2$ for all $w\in M$.
  \item[(g1)] $\|x_i-y_i\|^2>\|x_i-z_i\|^2+d_N(z_i)+\varepsilon'^2$ for $i\leq n$.
  \item[(g2)] $\|x_i-y_j\|> \|x_i-y_i\|-\varepsilon'$ for $i,j\leq n$, $i\neq j$
  \item[(h)]  $\|x_i-y_j\|^2 \geq \|x_i-z_i\|^2+d_N^2(z_i)+\varepsilon'^2$ for $i>n$, $j\leq n$. 
  \end{description}
  
  We construct an extension
  $(H,d_N)\supset(M,d_N)$ where the conclusion of the axiom indexed by $\varepsilon'$
  holds. Since $(M,d_N)$ is existentially closed and the conclusion of
  the axiom is true for $(H,d_N)$ replacing $\varepsilon$ for $\varepsilon'<\varepsilon$, then the
  conclusion of the axiom indexed by $\varepsilon$ will hold for $(M,d_N)$.

  So let $H\supset M$ be such that $\dim(H\cap M^\perp)=\infty$. Let
  $a_1,\dots,a_{n+k}$ and $c_1,\dots,c_{n}\in H$ be such that $\tp(\bar a,\bar
  c/\bar z \bar u)=\tp(\bar x,\bar y/\bar z \bar u)$ and ${\bar a \bar
    c}\ind_{\bar z \bar u}{M}$. We can write $a_i=a_i'+z_i'$ and $c_i=c_i'+u_i'$ for some
  $z_i',u_i'\in M$ and $a_i',c_i'\in M^\perp$. By (d) and (e) $\|a_i'\|\geq \eta/2$ for $i\leq n+k$
  and $\|c_i'\|\geq \eta/2$ for $i\leq n$. Now let $\hat c_i =c_i'+u_i'+\delta'
  c_i'/\|c_i'\|$, where $\delta'=\sqrt{2\varepsilon'(L+2)}$.

  Let the black points in $H$ be the ones from $M$ plus the points
  $\hat c_1,\dots,\hat c_n$.  Now we check that the conclusion of the
  axiom $\psi_{\vphi,\varepsilon'}$ holds.

\begin{enumerate}
\item $\vphi(\bar a,\bar c,\bar z,\bar u)\leq f(\varepsilon')$ since $\tp(\bar a,\bar
  c/\bar z \bar u)=\tp(\bar x,\bar y/\bar z \bar u)$.
\item Since $\|c_i-\hat c_i\|\leq \delta'$ and $\hat c_i$ is black we have $d_N(c_i)\leq \delta'$.
\item We check that the distance from $a_i$ to the black set is as prescribed for $i\leq n$. $d_N(a_i)\leq \|a_i-\hat c_i\|\leq \|a_i-c_i\|+\delta'$ for $i\leq n$. 

\noindent  Also, for $i\not= j, i,j\leq n$, using $(g2)$ we prove
  $\|a_i-\hat c_j\|\geq \|a_i-c_j\|-\delta'  \geq \|a_i-c_i\|-\varepsilon'-\delta' \geq \|a_i-c_i\|-2\delta'$. Finally by (a) $\|a_i-P_{M}(a_i)\|^2+d_N^2(P_{M}(a_i))\geq (\|a_i-z_i\|
  -\varepsilon')^2+(d_N(z_i)-\varepsilon')^2\geq \|a_i-z_i\|^2-2L\varepsilon'+\varepsilon'^2+d_N^2(z_i)-2\varepsilon'+\varepsilon'^2$ and by $(g1)$, we get $\|a_i-z_i\|^2-2L\varepsilon'+\varepsilon'^2+d_N^2(z_i)-2\varepsilon'+\varepsilon'^2 \geq
  \|a_i-c_i\|^2 -2L\varepsilon'-2\varepsilon' \geq \|a_i-c_i\|^2-4\delta'^2$.
\item  We check that $d_N(a_i)$ is as desired for $i> n$. Clearly $\|a_j-\hat c_i\|\geq \|a_j-c_i\|-\delta'$, so $\|a_j-\hat c_i\|^2\geq
  \|a_j-c_i\|^2+\delta'^2-2\delta'2L$ and by $(h)$ we get $ \|a_j-c_i\|^2+\delta'^2-4\delta'L \geq \|a_j-z_j\|^2+d_N^2(z_j)-4\delta' L-\varepsilon'^2+\delta'^2\geq
  \|a_j-z_j\|^2+d_N^2(z_j)-4\delta' L$.
\end{enumerate}

It remains to show that $(M,d_N)\subset (H,d_N)$, i.e., the function $d_N$
on $H$ extends the function $d_N$ on $M$ . Since we added the black
points in the ball of radius $L+1$, we only have to check that for any
$w\in M$ in the ball of radius $L+2$, $d_N^2(w)\leq \|w-\hat
c_i\|^2=\|w-u_i'\|^2+\|c_i'+\delta' (c_i'/\|c_i'\|)\|^2$.

But by $(f)$ $d_N^2(w)\leq \|w-u_i\|^2+\|c_i-u_i\|^2+\varepsilon'^2$, so it suffices to show that
$$\|w-u_i\|^2+\|c_i-u_i\|^2+\varepsilon'^2\leq \|w-u_i'\|^2+\|c_i'\|^2+2\delta' \|c_i'\|+\delta'^2$$
 By (a) $\|w-u_i'\|^2\geq (\|w-u_i\|-\varepsilon')^2$ and is enough to prove that
$$\|w-u_i\|^2+\|c_i-u_i\|^2+\varepsilon'^2\leq (\|w-u_i\|-\varepsilon')^2+\|c_i'\|^2+2\delta'
\|c_i'\|+\delta'^2$$ But $(\|w-u_i\|-\varepsilon')^2+\|c_i'\|^2+2\delta'
\|c_i'\|+\delta'^2=\|w-u_i\|^2-2\varepsilon'\|w-u_i\|+\varepsilon'^2+\|c_i'\|^2+2\delta'
\|c_i'\|+\delta'^2$ and $\|c_i-u_i\|^2\leq \|c_i-u_i'\|^2 +2\varepsilon' \|c_i-u_i'\|+\varepsilon'^2=\|c_i'\|^2 +2\varepsilon' \|c_i'\|+\varepsilon'^2$. Thus, after simplifying, we only need to check $2\varepsilon'\|w-u_i\|+\varepsilon'^2 \leq \delta'^2$ which is true since $2\varepsilon' \|w-u_i\| +\varepsilon'^2\leq 2\varepsilon'(2L+2)+\varepsilon'^2\leq 4\varepsilon'(L+2)$.
\end{proof}

\begin{theo}
Assume that $(M,d_N)\models T_N$. Then $(M,d_N)$ is existentially closed.
\end{theo}

\begin{proof}
  Let $(H,d_N)\supset (M,d_N)$ and assume that $(H,d_N)$ is
  $\aleph_0$-saturated. Let $\psi(\bar x, \bar v)$ be a quantifier free
  $L_N$-formula, where $\bar x=(x_1,\dots x_{n+k})$ and
  $\bar v=(v_1,\dots v_{l})$. Suppose that there are
  $a_1,\dots,a_{n+k}\in H\setminus M$ and $e_1,\dots e_l\in M$ such that $(H,d_N)\models
  \psi(\bar a, \bar e)=0$.
  After enlarging the formula $\psi$ if necessary, we may assume that
  $\psi(\bar x, \bar v)=0$ describes the values of $d_N(x_i)$ for
  $i\leq n+k$, the values of $d_N(v_j)$ for $j\leq l$ and the inner products
  between those elements. We may assume that for $i\leq n$  there is
  $\rho>0$ such that $d_N(a_i)-d(a_i,z)\geq 2\rho$ for all $z\in M$ with $d_N(z)\leq
  \rho$. Since $(H,d_N)$ is $\aleph_0$-saturated, there are $c_1,\dots c_n\in H$
  such that $d_N(a_i)=\|a_i-c_i\|$ and $d_N(c_i)=0$. Then $d(c_i,M)\geq \rho$.
  Fix $\varepsilon>0$, $\varepsilon<\rho,1$.  We may also assume that for $i>n$,
  $|d_N^2(a_i)-\|a_i-P_{M}(a_i)\|^2-d_N^2(P_{M}(a_i))|\leq \varepsilon/2$. Also,
  assume that all points mentioned so far live in a ball of radius $L$
  around the origin.
  Let $b_1,\dots,b_{n+k}\in M$ be the projections of $a_1,\dots,a_{n+k}$
  onto $ M$ and let $d_1,\dots,d_{n}\in M$ be the projections of
  $c_1,\dots,c_{n}$ onto $M$. Let $\vphi(\bar x,\bar y,\bar z,\bar
  u)=0$ be an $L$-statement that describes the inner products between the
  elements listed and such that $\vphi(\bar a,\bar c,\bar b,\bar
  d)=0$. Using the axioms we can find $\bar a'$, $\bar c'$ in $M$ such
  that $\vphi(\bar a',\bar c',\bar b,\bar d)\leq f(\varepsilon)$, $d_N(c_i')\leq
  \delta$ for $i\leq n$, $|d_N(a_i')-\|a_i'-c_i'\||\leq \delta$ for $i\leq n$
  and $|d_N^2(a_i)-\|a_i-b_i\|^2-d_N^2(b_i)|\leq 4L\delta$, where $\delta=\sqrt{2\varepsilon(L+2)}$.
Since $\varepsilon>0$ was arbitrary we get $(M,d_n)\models \inf_{x_1}\dots \inf_{x_{n+k}}\psi(\bar x,\bar v)=0$.
\end{proof}

\section{Model theoretic analysis of $T_N$}

We prove three theorems in this section about the theory $T_N$:

\begin{itemize}
\item $T_N$ is not simple,
\item $T_N$ is not even $NTP_2$! (Of course, this implies the
  previous, but we will provide the proof of non-simplicity as well.)
\item $T_N$ is $NSOP_1$. Therefore, in spite of having a tree
  property, our theory is still ``close to being simple'' in the
  precise sense of not having the $SOP_1$ tree property.
\end{itemize}

These results place $T_N$ in a very interesting situation in the
stability hierarchy for continuous logic.


\begin{nota}
We write $\tp$ for types of elements in the language $L$ and $\tp_N$
for types of elements in the language $L_N$. Similarly we denote by
$\acl_N$ the algebraic closure in the language $L_N$ and by $\acl$
the algebraic closure for pure Hilbert spaces. Recall that for a set
$A$, $\acl(A) =\dcl(A)$, and this corresponds to the closure of the
space spanned by $A$ (Fact \ref{dclh}).
\end{nota}

\begin{obse}
The theory $T_N$ does not have elimination of quantifiers. We use the
characterization of quantifier elimination given in Theorem 8.4.1 from
\cite{Ho}.
Let $H_1$ be a two dimensional Hilbert space, let $\{u_1,u_2\}$ be an
orthonormal basis for $H_1$ and let $N_1=\{0, u_0+\frac{1}{4} u_1\}$
and let $d^1_N(x)=\min\{1,\dist(x,N_1)\}$. Then $(H_1,d^1_N)\models T_0$.  Let
$a=u_0$, $b=u_0-\frac{1}{4}u_1$ and $c=u_0+\frac{1}{4}u_1$. Note that
$d^1_N(b)=\frac{1}{2}$. Let $(H_1',d^1_N)\supset (H_1,d^1_N)$ be existentially closed. Now
let $H_2$ be an infinite dimensional separable Hilbert space and let
$\{v_i:i\in \omega\}$ be an orthonormal basis. Let $N_2=\{x\in H: \|x-v_1\|=\frac{1}{4},
P_{\spane(v_1)}(x)=v_1\}\cup\{0\}$ and let $d^2_N(x)=\min\{1,\dist(x,N_2)\}$. Let
$(H_2',d^2_N)\supset (H_2,d^2_N)$ be existentially closed. Then
$(\spane(a),d^1_N
\upharpoonright_{\spane(a)})\cong(\spane(v_1),d^2_N\upharpoonright_{\spane(v_1)})$
and they can be identified say by a function $F$. But $(H_1',d^1_N)$ and
$(H_2',d^2_N)$ cannot be amalgamated over this common substructure: If
they could, then we would have
$\dist(F(b),v_1+\frac{1}{4}v_i)=\dist(b,v_1+\frac{1}{4}v_i)<\frac{1}{2}$ for some $i>1$ and thus
$d^1_N(b)<\frac{1}{2}$, a contradiction.

In this case, the main reason for this failure of amalgamation resides
in the fact that $(\spane(a),d^1_N
\upharpoonright_{\spane(a)})\cong(\spane(v_1),d^2_N\upharpoonright_{\spane(v_1)})$
is not a model of $T_0$: informally, the distance values around $v_1$ are
determined by an ``external attractor'' (the black point
$u_0+\frac{1}{4}u_1$ or the black ring orthogonal to $v_1$ at distance
$\frac{1}{4}$) that the subspace $(\spane(a),d^1_N
\upharpoonright_{\spane(a)})$ simply cannot see. This violates Axiom
(1) in the description of $T_0$. This ``noise external to the
substructure'' accounts for the failure of amalgamation, and ultimately
for the lack of quantifier elimination.
\end{obse}

In~\cite[Corollary 2.6]{CP}, the authors show that the algebraic
closure of the expansion of a simple structure with a generic subset
corresponds to the algebraic in the original language. However, in our
setting, the new algebraic closure $\acl_N(X)$ does not agree with the
old algebraic closure $\acl(X)$:

\begin{obse}\label{aclN}
  The previous construction shows that $\acl_N$ does not coincide
  with $\acl$. Indeed, $c\in \acl_N(a)\setminus \acl(a)$ - the set of solutions
  of the type $\tp_N(c/a)$ is $\{ c\}$, but $c\notin dcl(a)$ as $c\notin
  \spane(a)$.
\end{obse}

However, models of the basic theory $T_0$ are $L_N$-algebraically
closed. The proof is similar to~\cite[Proposition 2.6(3)]{CP}:

\begin{lema}\label{aclN2}
Let $(M,d_N)\models T_N$ and let $A\subset M$ be such that
$A=\dcl(A)$ and $(A,d_N\upharpoonright_A)\models T_0$. Let $a\in M$. Then $a\in\acl_N(A)$ if and
only if $a\in A$.
\end{lema}

\begin{proof}
  Assume $a\notin A$. We will show that $a\notin \acl_N(A)$. Let $a'\models \tp(a/A)$
  be such that ${a'}\ind_{A}{M}$. Let $(M',d_N)$ be an isomorphic copy
  of $(M,d_N)$ over $A$ through $f:M\to_A M'$ such that $f(a)=a'$. We
  may assume that ${M'}\ind_{A}{M}$. Since $(A,d_N\upharpoonright_A)$ is
  an amalgamation base, $(N,d_N)=(M\oplus_AM',d_N) \models T_0$. Let $(N',d_N)\supset
  (N,d_N)$ be an existentially closed structure. Then
  $\tp_N(a/A)=\tp_N(a'/A)$ and therefore $a\notin \acl_N(A)$.
\end{proof}

As $T_N$ is model complete, the types in the extended language are
determined by the existential formulas within them, i.e. formulas of
the form $\inf_{\bar y} \varphi(\bar y, \bar x)=0$

Another difference with the work of Chatzidakis and Pillay is that the
analogue to~\cite[Proposition 2.5]{CP} no longer holds. Let $a,b,c$ be as in
Observation \ref{aclN}; notice that
$(\spane(a),d_N\upharpoonright_{\spane(a)})\cong(\spane(v_1),d_N
\upharpoonright_{\spane(v_1)})$.
However, $(H'_1,d_N,a)\not\equiv (H'_2,d_N,v_1)$. Instead, we can show the
following weaker version of the Proposition.

\begin{prop}\label{elemequivN}
  Let $(M,d_N)$ and $(N,d_N)$ be models of $T_N$ and let $A$ be a
  common subset of $M$ and $N$ such that
  $(\spane(A),d_N\upharpoonright_{\spane(A)})\models T_0$. Then
  \[ (M,d_N)\equiv_A (N,d_N).\]
\end{prop}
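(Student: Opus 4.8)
The plan is to exploit model completeness of $T_N$. By the two theorems above, the models of $T_N$ are exactly the existentially closed models of $T_0\cup\{d_N(0)=0\}$, so $T_N$ is model complete and hence \emph{every} $L_N$-embedding between models of $T_N$ is elementary. It therefore suffices to embed both $(M,d_N)$ and $(N,d_N)$ into a single model of $T_N$ by maps fixing $A$ pointwise: then for every $L_N$-formula $\varphi(\bar x)$ and every tuple $\bar a$ from $A$ the value of $\varphi(\bar a)$ is the same in $M$, in the common model, and in $N$, which is exactly $(M,d_N)\equiv_A(N,d_N)$.

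To build such a common model, set $A_0=(\spane(A),d_N\upharpoonright_{\spane(A)})$, which is a model of $T_0$ by hypothesis and hence, by the free amalgamation lemma, an amalgamation base inside the class of models of $T_0$; note $A_0\in\mathcal{K}_0$ since $d_N(0)=0$ holds in every model of $T_N$. Both $M$ and $N$ contain $A_0$ as an $L_N$-substructure. I would then choose an isomorphic copy $(N',d_N)$ of $(N,d_N)$ over $A_0$ with $N'\ind_{A_0}M$ in the pure Hilbert-space sense (a free copy, exactly as in the proof of Lemma \ref{aclN2}). The free amalgamation lemma produces the amalgam $(P,d_N)=(M\oplus_{A_0}N',d_N)\models T_0$ containing $M$ and $N'$ as substructures over $A_0$. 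Finally extend $(P,d_N)$ to an existentially closed $(P^*,d_N)$; by the theorem characterizing existentially closed models, $(P^*,d_N)\models T_N$, and we obtain $L_N$-embeddings $M\hookrightarrow P^*$ and $N\cong N'\hookrightarrow P^*$, both fixing $A$.

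By model completeness these embeddings are $L_N$-elementary, yielding $(M,d_N)\equiv_A(P^*,d_N)\equiv_A(N,d_N)$, as required. The one point needing care — and where the argument genuinely uses the hypothesis on $A$ — is the amalgamation step: the free amalgam is a model of $T_0$ precisely because the common substructure $A_0$ is itself a model of $T_0$, i.e. an amalgamation base, which is exactly the assumption $(\spane(A),d_N\upharpoonright_{\spane(A)})\models T_0$. Observation \ref{aclN} shows this cannot be dropped, since without it the relevant structures fail to amalgamate and elementary equivalence over $A$ breaks down. The remaining verifications — that $d_N(0)=0$ places everything in $\mathcal{K}_0$, and that a free Hilbert-space copy $N'$ exists — are routine.
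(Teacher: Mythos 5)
Your proposal is correct and follows essentially the same route as the paper's proof: form the free amalgam of the two models over $(\spane(A),d_N\upharpoonright_{\spane(A)})$ (which is an amalgamation base because it models $T_0$), extend to a model of $T_N$, and conclude by model completeness that both inclusions are elementary. The only difference is cosmetic --- you make explicit the replacement of $N$ by a free copy $N'$ with $N'\ind_{A_0}M$, which the paper compresses into the opening phrase ``Assume that $M\cap N=\spane(A)$.''
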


\begin{proof}
  Assume that $M\cap N=\spane(A)$. Since
  $(\spane(A),d_N\upharpoonright_{\spane(A)})\models T_0$, it is an
  amalgamation base and therefore we may consider the free amalgam
  $(M\oplus_{\spane(A)}N,d_N)$ of
  $(M,d_N)$ and $(N,d_N)$ over
  $(\spane(A),d_N\upharpoonright_{\spane(A)})$. Let now $(E,d_N)$ be a model
  of $T_N$ extending $(M\oplus_{\spane(A)}N,d_N)$. By the model
  completeness of $T_N$, we have that $(M,d_N)\prec (E,d_N)$ and $(N,d_N)\prec
  (E,d_N)$ and thus $(M,d_N)\equiv_A (N,d_N)$.
\end{proof}

\subsection{Generic independence}\label{genericindep}
In this section we define an abstract notion of independence and study
its properties.

Fix $(\U,d_N)\models T_N$ be a $\kappa$-universal domain.

\begin{defi}
 Let $A,B,C\subset \U$ be small sets. We say that $A$ is $*$-independent
 from $B$ over $C$ and write $A \ind[*]_C B$ if $\acl_N(A\cup C)$ is
 independent (in the sense of Hilbert spaces) from $\acl_N(C\cup B)$ over
 $\acl_N(C)$. That is, $A \ind[*]_C B$ if for all $a\in \acl_N(A\cup C)$,
 $P_{\overline{B\cup C}}(a)=P_{\overline{C}}(a)$, where $\overline{B\cup
   C}=\acl_N(C\cup B)$ and $\overline{C}=\acl_N(C)$.
\end{defi}

\begin{prop}\label{*independence}
  The relation $\ind[*]$ satisfies the following properties (here $A$,
  $B$, etc., are any small subsets of $\U$):
  \begin{enumerate}
  \item Invariance under automorphisms of $\U$.
  \item Symmetry: $A \ind[*]_C B \Longleftrightarrow B \ind[*]_C A$.
  \item Transitivity: $A \ind[*]_C BD$ if and only if
    $A \ind[*]_C B$ and $A \ind[*]_{BC} D$.
  \item Finite Character: $A \ind[*]_C B$ if and only
    $\ba \ind[*]_C B$ for all $\ba \in A$ finite.
  \item Local Character: If $\bar a$ is any finite tuple, then there
    is countable $B_0 \subseteq B$ such that
    $\bar a \ind[*]_{B_0} B$.
  \item Extension property over models of $T_0$. If $(C,d_N\upharpoonright_C)\models T_0$, then
 we can find $A'$ such that $\tp_N(A/C)=\tp_N(A'/C)$ and $A' \ind[*]_C B$.
 \item Existence over models: $\bar a\ind[*]_MM$ for any $\bar a$.
\item Monotonicity: $\bar a \bar a'\ind[*]_M\bar b \bar b'$ implies $\bar a\ind_M \bar b$.
  \end{enumerate}
\end{prop}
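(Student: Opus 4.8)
The plan is to reduce every clause to the corresponding property of Hilbert-space (non-dividing) independence $\ind$ on the reduct, applied to the $\acl_N$-closed sets appearing in the definition, together with two structural facts: that $\acl_N$ is a finitary closure operator with $\acl_N(M)=M$ for every model of $T_N$ and $\acl_N(C)=C$ whenever $(C,d_N\upharpoonright_C)\models T_0$ (so $C$ is a Hilbert subspace, hence $\dcl$-closed, and Lemma~\ref{aclN2} applies), and that on the reduct $\ind$ is a stable independence relation---symmetric, transitive, of finite and local character, with extension and stationarity, as recalled in Section~\ref{modthH}. Writing $\overline{X}=\acl_N(X)$, four clauses are immediate upon unwinding $A\ind[*]_C B \Leftrightarrow \overline{AC}\ind_{\overline C}\overline{BC}$: invariance (1) holds because $L_N$-automorphisms preserve $\acl_N$ and the Hilbert geometry; symmetry (2) is the symmetry of $\ind$ between $\overline{AC}$ and $\overline{BC}$ over the common base $\overline C$; existence over models (7) unwinds to $\overline{\bar a M}\ind_M M$, which always holds; and monotonicity (8) follows because $\bar a\in\overline{\bar a\bar a' M}$ and $\bar b\in\overline{\bar b\bar b' M}$, so $\ind$-monotonicity yields $\bar a\ind_M\bar b$.

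Finite character (4) uses finite character of $\acl_N$: in the nontrivial direction a finite tuple $\bar e\in\overline{AC}$ already lies in $\overline{\bar a C}$ for some finite $\bar a\subseteq A$, so the hypothesis $\bar a\ind[*]_C B$ gives $\bar e\ind_{\overline C}\overline{BC}$, and finite character of $\ind$ assembles these into $\overline{AC}\ind_{\overline C}\overline{BC}$. For local character (5) I would build $B_0=\bigcup_n B_0^n$ as an increasing union of countable subsets of $B$: given countable $B_0^n$, the set $\overline{\bar a B_0^n}$ is separable, so by local character of $\ind$ in Hilbert spaces the projection onto $\overline B$ of each of its (countably many, up to density) finite tuples lies in $\overline{B_0^{n+1}}$ for some countable $B_0^{n+1}\supseteq B_0^n$ chosen inside $B$. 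Then for finite $\bar e\in\overline{\bar a B_0}$ finite character of $\acl_N$ places $\bar e$ in some $\overline{\bar a B_0^n}$, whence $\bar e\ind_{\overline{B_0}}\overline B$, and finite character of $\ind$ gives $\bar a\ind[*]_{B_0}B$.

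For transitivity (3) the reverse implication is clean: from $A\ind[*]_C B$, i.e. $\overline{AC}\ind_{\overline C}\overline{BC}$, and $A\ind[*]_{BC}D$, i.e. $\overline{ABC}\ind_{\overline{BC}}\overline{BDC}$, I get $\overline{AC}\ind_{\overline{BC}}\overline{BDC}$ by $\ind$-monotonicity, and then right transitivity of $\ind$ over $\overline C$ combines this with $\overline{AC}\ind_{\overline C}\overline{BC}$ to yield $\overline{AC}\ind_{\overline C}\overline{BDC}$, that is $A\ind[*]_C BD$. The forward implication is where the real work lies and is the main obstacle. From $A\ind[*]_C BD$, monotonicity and base change give $A\ind[*]_C B$ and $\overline{AC}\ind_{\overline{BC}}\overline{BDC}$; to conclude $A\ind[*]_{BC}D$ I must upgrade the left side from $\overline{AC}$ to $\overline{ABC}$, i.e. show $\overline{ABC}\ind_{\overline{BC}}\overline{BDC}$. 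Since $A\cup B\cup C\subseteq\overline{AC}\cup\overline{BC}$, this reduces to controlling $\acl_N$ on an independent union, namely the claim that $\acl_N(\overline{AC}\cup\overline{BC})$ adds no point beyond the Hilbert span $\dcl(\overline{AC}\cup\overline{BC})$. This is exactly where the random-predicate structure departs from the classical Chatzidakis--Pillay setting (compare Observation~\ref{aclN}, where $\acl_N\neq\acl$), and it is the step I expect to need genuine care. I would prove it from the free-amalgamation lemma for $\mathcal K_0$ (the $d_N^3$ construction): over the $T_0$-base the black set of the amalgam is exactly the union of the two black sets, so no new black---hence no new algebraic---points appear in the span; granting this, $\dcl(\overline{AC}\cup\overline{BC})\ind_{\overline{BC}}\overline{BDC}$ follows from $\overline{AC}\ind_{\overline{BC}}\overline{BDC}$, and monotonicity finishes.

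Finally, extension over models of $T_0$ (6) is handled by amalgamation. Assuming $(C,d_N\upharpoonright_C)\models T_0$, hence $\overline C=C$, I would take a model $(M,d_N)\models T_N$ containing $A\cup B\cup C$ and a $C$-isomorphic copy $f\colon (M,d_N)\to_C (M',d_N)$ positioned so that $M'\ind_C M$ in the Hilbert-space sense. Because $C\models T_0$ is an amalgamation base, the free amalgam carries a distance function making $(M\oplus_C M',d_N)\models T_0$ (the $d_N^3$ lemma), and I extend it to a model $(E,d_N)\models T_N$. Setting $A'=f(A)$, the map $f$ is $L_N$-elementary and fixes $C$, so $\tp_N(A'/C)=\tp_N(A/C)$; equivalently Proposition~\ref{elemequivN} applies to the two models sharing the $T_0$-submodel $C$. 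Since $\overline{A'C}\subseteq M'$ and $\overline{BC}\subseteq M$ with $M'\ind_C M$, $\ind$-monotonicity gives $\overline{A'C}\ind_C\overline{BC}$, that is $A'\ind[*]_C B$, as required.
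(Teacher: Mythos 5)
Most of your proposal tracks the paper's own proof. Clauses (1), (2), (7), (8) are the same immediate unwinding of the definition; your (4) is the paper's argument in slightly different clothing (the paper assembles via a countable dense subset of $A$, you via finite character of $\acl_N$ --- equivalent, both resting on finite character of algebraic closure in metric structures); your (6) is the same free-amalgamation-over-a-$T_0$-base argument, with your $T_N$-models $M,M'$ and model completeness playing the role of the paper's $T_0$-models $D',E$, and Lemma~\ref{aclN2} supplying $\acl_N$-closedness in both versions. Your (5) is in fact \emph{more} careful than the paper's two-line proof: the iterated chain $B_0=\bigcup_n B_0^n$ is what one actually needs in order to control all of $\acl_N(\bar a\cup B_0)$ rather than merely the projections of $\bar a$, which is all the paper's argument explicitly handles.

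The genuine gap is exactly where you flagged it: the forward half of transitivity (3). You correctly isolate the problem --- passing from $\acl_N(A\cup C)\ind_{\acl_N(C)}\acl_N(B\cup C\cup D)$ to $\acl_N(A\cup B\cup C)\ind_{\acl_N(B\cup C)}\acl_N(B\cup C\cup D)$ requires controlling $\acl_N$ of the independent union --- but your proposed repair does not work. First, the free amalgamation lemma is stated for members of $\mathcal{K}_0$, i.e.\ models of $T_0$, and $\acl_N$-closed sets need not be models of $T_0$: Lemma~\ref{aclN2} gives only the converse inclusion, and axiom (1) of $T_0$ demands approximate black witnesses \emph{inside} the set, which are typically non-algebraic and hence missing from $\acl_N(A\cup C)$. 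Second, even over a legitimate base the free amalgam is only one possible completion: Hilbert-space independence of $\acl_N(A\cup C)$ and $\acl_N(B\cup C)$ over $\acl_N(C)$ does not determine the monster's actual $d_N$ on $\spane(\acl_N(A\cup C)\cup \acl_N(B\cup C))$ --- this failure of stationarity is precisely what later drives the non-simplicity and $TP_2$ results --- so you cannot assert that the black set of the union is the union of the two black sets. Third, ``no new black points, hence no new algebraic points'' is a non sequitur: algebraicity means compactness of the set of realizations, and gray points (variants of the attractor in Observation~\ref{aclN}, e.g.\ determined by rigid black configurations) can perfectly well be algebraic without being black. For what it is worth, the paper's own proof of (3) is a one-line reduction to transitivity of Hilbert-space independence and never addresses the enlargement of the left-hand side from $\acl_N(A\cup C)$ to $\acl_N(A\cup B\cup C)$ either; a complete argument would need something like the $\ecl$/minimality machinery of the last section (Lemma~\ref{extensionperp}, Corollary~\ref{eclequalsacl}) to show that the algebraic points of the union project where they should.
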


\begin{proof}
\begin{enumerate}
  \item Is clear.
  \item It follows from the fact that independence in Hilbert spaces satisfies  
Symmetry (see Proposition \ref{independence}).
  \item It follows from the fact that independence in Hilbert spaces satisfies  
 Transitivity (see Proposition \ref{independence}).
  \item Clearly $A \ind[*]_C B$ implies that
    $\ba \ind[*]_C B$ for all $\ba \in A$ finite. On the other hand if
    $\ba \ind[*]_C B$ for all $\ba \in A$ finite, then for a dense
    subset $A_0$ of
     $A$, $A_0 \ind[*]_C B$ and thus $A \ind[*]_C B$.
   \item Local Character: let $\bar a$ be a finite tuple. Since
     independence in Hilbert spaces satisfies local character, there
     is $B_1 \subseteq \acl_N(B)$ countable such that $\bar a \ind[*]_{B_1}
     B$. Now let $B_0 \subseteq B$ be countable such that $\acl_N(B_0)\supset
     B_1$. Then  $\bar a \ind[*]_{B_0} B$.
   \item Let $C$ be such that $(C,d_N\upharpoonright_C)\models T_0$. Let $D\supset
     A\cup C$ be such that $(D,d_N\upharpoonright_D)\models T_0$ and let $E\supset B\cup
     C$ be such that $(E,d_N\upharpoonright_E)\models T_0$. Changing $D$ for
     another set $D'$ with $\tp_N(D'/C)=\tp_N(D/C)$, we may assume
     that the space generated by $D'\cup E$ is the free amalgamation of
     $D'$ and $E$ over $C$.
   By lemma \ref{aclN2} $D'$, $E$ are algebraically closed and $D' \ind[*]_C B$.
   \item It follows from the definition of $*$-independence.
   \item It follows from the definition of $*$-independence and transitivity.
\end{enumerate}
\end{proof}

Therefore we have a natural independence notion that satisfies many
good properties, but not enough to guarantee the simplicity of
$T_N$. 

We will show below that the theory $T_N$ has both $TP_2$ and $NSOP_1$. This places it in an
interesting area of the stability hierarchy for continuous model
theory: while having the tree property $TP_2$ and therefore lacking
the good properties of $NTP_2$ theories, it still has a quite
well-behaved independence notion $\ind[*]$, good enough to guarantee
that it does not have the $SOP_1$ tree property. Therefore, although
the theory is not simple, it is reasonably close to this family of 
theories.

\subsection{The failure of simplicity}\label{simplicity}

\begin{theo}
The theory $T_N$ is not simple.
\end{theo}

  The proof's idea uses a characterization of simplicity in terms of
  the number of partial types due to Shelah (see~\cite{Sh}; see
  also Casanovas~\cite{Cas} for further analysis): $T$ is simple iff
  for all $\kappa,\lambda$ such that
  $NT(\kappa,\lambda)<2^\kappa+\lambda^\omega$, where
  $NT(\kappa,\lambda)$ counts the supremum of the cardinalities of
  families of pairwise incompatible partial types of size $\leq
  \kappa$ over a set of cardinality $\leq \lambda$. This holds for
  continuous logic as well. We show that $T_N$ fails this
  criterion.

  \begin{proof}
    
  Fix $\kappa$ an infinite cardinal and $\lambda\geq \kappa$. We will
  find a complete submodel $M_f$ of the monster model, of density
  character $\lambda$, and $\lambda^\kappa$ many types over subsets of
  $M_f$ of power $\kappa$ in such a way that we guarantee that they
  are pairwise incompatible in a uniform way.

  Now also fix some orthonormal basis of $M_f$, listed as 
  \[ \{ b_i|i <\kappa\}\cup \{ a_j|j <\lambda \} \cup \{ c_X |
  X\in P_\kappa(\lambda)\}.\]
 Also fix, \emph{for every} $X\in P_\kappa(\lambda)$, a bijection
 \[ f_X: \{ b_i|i <\kappa\} \to \{
   a_j|j \in X \}.\]
 Let the ``black points'' of $M_f$ consist of the
  set 
  \[ N = \{ c_X+b_i+(1/2)f_X(b_i)\mid i<\kappa,X\in
    P_\kappa(\lambda)\}\cup \{ 0\}\] and as usual
  define $d_N(x)$ as the distance from $x$ to $N$. This is a submodel
  of the monster.

  Let $A_X:=\{ b_i|i <\kappa\} \cup \{ a_j | j\in X\}$ for each $X\in
  P_\kappa(\lambda)$.

  The crux of the proof is to notice that if $X\not= Y$ then the types
  $\tp(c_X/A_X)$ and $\tp(c_Y/A_Y)$ are incompatible, thereby 
  witnessing that there are $\lambda^\kappa$ many incompatible types:

  Suppose there is some $c$ such that $\tp(c/A_X)=\tp(c_X/A_X)$ and
  $\tp(c/A_Y)=\tp(c_Y/A_Y)$. Take (wlog) $j\in Y\setminus X$. Pick
  $\ell<\kappa$ such
  that $f_Y(b_\ell)=a_j$. Let $k\in X$ be such that $f_X(b_\ell)=a_k$.

  In $M_f$, the distance to black of $c_X+b_\ell-\frac{1}{2}a_k$ is
  $1$: by definition,
  $c_X+b_\ell+\frac{1}{2}a_k=c_X+b_\ell+\frac{1}{2}f_X(b_\ell)\in N$
  and the only difference between  $c_X+b_\ell-\frac{1}{2}a_k$ and
  $c_X+b_\ell+\frac{1}{2}a_k$ is the sign in front of an element of an
  orthonormal basis.

  Therefore the distance to black of $d=c+b_\ell-\frac{1}{2}a_k$ is
  also $1$ (in the
  monster). However, $e=c+b_\ell+\frac{1}{2}a_j$ must be a black
  point, since $e'=c_Y+b_\ell+\frac{1}{2}a_j$ is black (by definition
  of $N$ and since $a_j=f_Y(b_\ell)$ and
  $\tp(c/A_Y)=\tp(c_Y/A_Y)$).
  
  On the other hand, the distance from $e$ to $d$ is
  $\frac{\sqrt{2}}{2}<1$. This contradicts that the color of $d$ is
  $1$.
  \end{proof}

This stands in sharp contrast with respect to the result by
Chatzidakis and Pillay in the (discrete) first order case. The
existence of these incompatible types is rendered possible here by the
presence of ``euclidean'' interactions between the elements of the
basis chosen.

So far we have two kinds of expansions of Hilbert spaces by
predicates: either they remain stable (as in the case of the distance
to a Hilbert subspace as in the previous section) or they are not even
simple. 

\subsection{$T_N$ has the tree property $TP_2$}

\begin{theo}
  The theory $T_N$ has the tree property $TP_2$.
\end{theo}

\begin{proof}
 We will construct a complete submodel $M\models T_0$ of the monster model, of density
  character $2^{\aleph_0}$, and a quantifier free formula $\varphi(x;y,z)$ 
 that witnesses $TP_2$ inside $M$. Since this model can be embedded in the monster
 model of $T_N$ preserving the distance to black points, this will show that $T_N$
 has $TP_2$.

  We fix some orthonormal basis of $M$, listed as 
  \[ \{ b_i|i <\omega\}\cup \{ c_{n,i}|n,i <\omega \} \cup \{ a_f | f:\omega\to \omega\}.\]
 Also let the ``black points'' of $M$ consist of the
  set 
  \[ N = \{ a_f+b_n+(1/2)c_{n,f(n)}\mid n<\omega,f:\omega\to \omega\}\cup \{ 0\}\] and as usual
  define $d_N(x)$ as the distance from $x$ to $N$. This is a model of $T_0$ and thus a submodel
  of the monster.

  Let $\varphi(x,y,z)=\max\{1-d_N(x+y-(1/2)z) ,d_N(x+y-(1/2)z)\}$.

  \textbf{Claim 1}
  For each $i$, the conditions $\{\varphi(x,b_i,c_{i,j})=0:j\in \omega\}$ are $2$-inconsistent.
  
  Assume otherwise, so we can find $a$ (in an extension of $M$) such that 
  $d_N(a+b_i+(1/2)c_{i,j})=0$ and $d_N(a+b_i-(1/2)c_{i,l})=1$ for some $j<l$. But 
  then $d(a+b_i+(1/2)c_{i,j},a+b_i-(1/2)c_{i,l})=d((1/2)c_{i,j},-(1/2)c_{i,l})=\sqrt{2}/2<1$.
  Since $a+b_i+(1/2)c_{i,j}$ is a black point, we get that  $d_N(a+b_i-(1/2)c_{i,l})\leq \sqrt{2}/2$
  a contradiction.
  
    \textbf{Claim 2}
  For each $f$ the conditions $\{\varphi(x,b_i,c_{i,f(i)})=0:i\in \omega\}$ are consistent.
  
  Indeed fix $f$ and consider $a_f$, then by construction $d_N(a_f+b_n+(1/2)c_{n,f(n)})=0$
  and $d(a_f+b_n-(1/2)c_{n,f(n)},a_f+b_n+(1/2)c_{n,f(n)})=1$, so $d_N(a_f+b_n-(1/2)c_{n,f(n)})\leq 1$. 
  
  Now we check the distance to the other points in $N$. It is easy to see that
   $d(a_f+b_n-(1/2)c_{n,f(n)},a_f+b_m+(1/2)c_{m,f(m)})>1$ for $m\neq n$,  $d(a_f+b_n-(1/2)c_{n,f(n)},a_g+b_k+(1/2)c_{k,g(k)})>1$ for $g\neq f$ and all indexes $k$.
  Finally, $d(a_f+b_n-(1/2)c_{n,f(n)},0)>1$.
  This shows that $a_f$ is a witness for the claim.

\end{proof}

\subsection{$T_N$ and the property $NSOP_1$}

Chernikov and Ramsey have proved that whenever a first order discrete
theory satisfies the following properties (for arbitrary models and
tuples), then the theory satisfies the $NSOP_1$ property
(see~\cite[Prop. 5.3]{CR}).

\begin{itemize}
\item \emph{Strong finite character:} whenever $\bar a$ depends on $\bar b$ over
  $M$, there is a formula $\varphi(x,\bar b,\bar m)\in \tp(\bar a/\bar b M)$ such that
  every $\bar a'\models\varphi(\bar x,\bar b,\bar m)$ depends on $\bar b$ over $M$.
\item \emph{Existence over models:} $\bar a\ind_M M$ for any $\bar a$.
\item \emph{Monotonicity:} $\bar a \bar a'\ind_M\bar b \bar b'$ implies $\bar a\ind_M \bar b$.
\item \emph{Symmetry:} $\bar a\ind_M \bar b \quad \Longleftrightarrow \bar b\ind_M \bar a$.
\item \emph{Independent amalgamation:} $\bar c_0\ind_M \bar c_1$, $\bar b_0\ind_M \bar c_0$,
  $\bar b_1\ind_M \bar c_1$, $\bar b_0\equiv_M \bar b_1$ implies there exists $\bar b$ with
  $\bar b\equiv_{\bar c_0M}\bar b_0$, $\bar b\equiv_{\bar c_1M}\bar b_1$.
\end{itemize}

We prove next that in $T_N$, $\ind^*$ satisfies analogues of these
five properties - we may thereby conclude that $T_N$ can be regarded
(following the analogy) as a $NSOP_1$ continuous theory.

In what remains of the paper, we prove that $T_N$ satisfies these
properties.

We focus our efforts in \emph{strong finite character} and
\emph{independent amalgamation}, the other properties were proved in 
Proposition \ref{*independence}.

We need the following setting:

Let $\mathbb M$ be the monster model of $T_N$ and $A\subset \mathbb
\mathbb M$. Fix $\mathfrak A$ with $A\subset {\mathfrak A}\subset
\mathbb M$
be such that $\mathfrak A\models T_0$  and let $\bar a=(a_0,\dots,a_n)\in
M$. We say that $(\bar a,A,\mathfrak B)$ is minimal if $\tp({\mathfrak
  B}/A)=\tp({\mathfrak A}/A)$
and for all $\bar b\in {\mathfrak M}$, if $\tp(\bar b/A)=\tp(\bar a/A)$ then
\[ \| \pr_\Bstr(b_0) \|+\cdots+\| \pr_\Bstr(b_n) \|\geq \| \pr_\Bstr(a_0)
  \|+\cdots+\| \pr_\Bstr(a_n) \|.\]
By compactness, for all $p\in S(A)$ there is a minimal $(\bar a,A,\Bstr)$
such that $\bar a \models p$.

Now let $\cl_0(A)$ be the set of all $x$ such that for some minimal
$(\bar a,A,\Bstr)$, $x=\pr_\Bstr(a_0)$ (the first coordinate of $\bar a$). 

\begin{lema}
  If $\tp(\Bstr/A)=\tp(\Astr/A)$ and $x\in \cl_0(A)$ then $x\in \Bstr$.
\end{lema}

\begin{proof}
  Suppose not. Let $\Bstr$ witness this and let $\Cstr$ and $\bar a$ be
  such that $(\bar a,A,\Cstr)$ is minimal and $x=\pr_\Cstr(a_0)$. Since
  $\Cstr\models T_0$, wlog $\Bstr\ind_\Cstr a$
  (independence in the sense of Hilbert spaces). But then
  $\pr_\Bstr(a_i)=\pr_\Bstr(\pr_\Cstr(a_i))$ for each $i$ and thus $\|
  \pr_\Bstr(a_0) \|+\cdots+\| \pr_\Bstr(a_n) \|< \| \pr_\Cstr(a_0)
  \|+\cdots+\| \pr_\Cstr(a_n) \|$. This contradicts minimality.
\end{proof}

A direct consequence of the previous lemma is that  $\cl_0(A)\subset
\bcl_N(A)=\cap_{A\subset \Bstr\models T_N} \Bstr$, as $\cl_0(A)$ belongs to every 
model of the theory $T_N$.

We now define the \emph{essential closure} $\ecl$.
Let
$\cl_{\alpha+1}(A)=\cl_0(\cl_\alpha(A))$ for all ordinals $\alpha$,
$\cl_\delta(A)=\bigcup_{\alpha<\delta}(\cl_\alpha(A))$, and
$\ecl(A)=\bigcup_{\alpha\in On}\cl_\alpha(A)$.

\begin{lema}\label{extensionperp}
  For all $\bar a,B,A$, if $\ecl(A)=A$ then there is $\bar b$ such that
  $\tp(\bar b/A)=\tp(\bar a/A)$ and $\bar b\ind_A B$.
\end{lema}

\begin{proof}
  Choose $\Astr\models T_0$ such that $A\subset \Astr$ and $\bar c$ such
  that $\tp(\bar c/A)=\tp(\bar a/A)$ and $(\bar c,A,\Astr)$ is minimal. Since
  $\cl_0(A)=A$, $\pr_\Astr(c_i)\in A$ for all $i\leq n$
  ($\bar c=(c_0,\dots,c_n)$), i.e. $\bar c\ind_A\Astr$. Now choose $\bar b$ such
  that $\tp(\bar b/\Astr)=\tp(\bar c/\Astr)$ and $\bar b\ind_\Astr B$. Then 
  $\bar b$ is as needed.
\end{proof}

\begin{coro}\label{eclequalsacl}
  $\ecl(A)=\acl_N(A)$.
\end{coro}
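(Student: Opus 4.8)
The plan is to prove the two inclusions of $\ecl(A)=\acl_N(A)$ separately, with $\acl_N(A)\subseteq\ecl(A)$ as the substantive direction. For $\ecl(A)\subseteq\acl_N(A)$ I would first promote the preceding remark $\cl_0(B)\subseteq\bcl_N(B)$ to every level of the hierarchy: since $\bcl_N$ is monotone and idempotent, a transfinite induction gives $\cl_\alpha(A)\subseteq\bcl_N(A)$ for all $\alpha$ — using $\cl_{\alpha+1}(A)=\cl_0(\cl_\alpha(A))\subseteq\bcl_N(\cl_\alpha(A))\subseteq\bcl_N(\bcl_N(A))=\bcl_N(A)$ and taking unions at limits — so that $\ecl(A)\subseteq\bcl_N(A)$. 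It then remains to identify $\bcl_N(A)$ with $\acl_N(A)$: one inclusion is standard (by model completeness of $T_N$ every model over $A$ is elementary, and $\acl_N(A)$ sits inside every elementary submodel over $A$), while the converse — that an element in every model of $T_N$ over $A$ is algebraic over $A$ — is obtained by noting that a non-algebraic $x$ satisfies $x\notin\overline{\spane}(A)$ and then building a generic existentially closed extension of a $T_0$-model over $A$ whose added witnesses are chosen independent from $x$, thereby omitting $x$.

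For the main inclusion $\acl_N(A)\subseteq\ecl(A)$, set $E=\ecl(A)$. The first point is that the increasing bounded chain $(\cl_\alpha(A))_\alpha$ stabilises, so $E$ is a fixed point of $\cl_0$, i.e. $\cl_0(E)=E$. The second, and decisive, ingredient is Lemma \ref{aclN2}: any $\Astr\models T_0$ with $\Astr=\dcl(\Astr)$ has $\acl_N(\Astr)=\Astr$. Take the background model $\Astr$ used in the definition of $\cl_0(E)$ (so $E\subseteq\Astr$, $\Astr\models T_0$, and $\Astr$ is $\dcl$-closed, being a Hilbert space), and let $a\in\acl_N(E)$. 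By monotonicity $a\in\acl_N(E)\subseteq\acl_N(\Astr)=\Astr$, so already $\pr_\Astr(a)=a$. Moreover every $a'\models\tp_N(a/E)$ lies in the compact $E$-orbit of $a$, hence $a'\in\acl_N(E)\subseteq\Astr$ as well, giving $\pr_\Astr(a')=a'$ and $\|\pr_\Astr(a')\|=\|a'\|=\|a\|$. Thus all these projection norms coincide, so the trivial configuration $((a),E,\Astr)$ is minimal and $a=\pr_\Astr(a)\in\cl_0(E)=E$. This shows $\acl_N(E)\subseteq E$, and since $A\subseteq E$ yields $\acl_N(A)\subseteq\acl_N(E)\subseteq E=\ecl(A)$, the inclusion follows.

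The step I expect to be the main obstacle is precisely this use of Lemma \ref{aclN2}, and seeing \emph{why} the naive alternative fails. One is tempted to prove $\acl_N(A)\subseteq\ecl(A)$ by the extension property (Lemma \ref{extensionperp}): if $a\notin E$ one produces, over the $\ecl$-closed set $E$, a sequence of $\ind$-independent $E$-conjugates of $a$, which by the orthogonality characterisation of independence (Proposition \ref{independence}) are uniformly $\sqrt2\,d$-separated and hence violate the compactness of the $A$-orbit of $a$. This works only when $a$ has a \emph{positive} distance $d$ to $\overline{\acl_N(E)}$; it breaks down exactly in the "attractor" case $a\in\overline{\acl_N(E)}\setminus E$, where the extra $d_N$-information pins $a$ down while the Hilbert metric cannot separate the conjugates. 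That case is governed not by independence but by the $T_0$-algebraic-closure phenomenon of Lemma \ref{aclN2}, which forces such an $a$ to be its own projection into $\Astr$ and thereby lands it in $\cl_0(E)=E$. The only other point requiring real care is the identification $\bcl_N=\acl_N$ invoked in the first inclusion.
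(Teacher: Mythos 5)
Your ``main inclusion'' $\acl_N(A)\subseteq\ecl(A)$ is correct and is genuinely different from what the paper does: the paper routes everything through Lemma \ref{extensionperp} and $\bcl_N$, whereas you get this direction directly from Lemma \ref{aclN2} --- every $a\in\acl_N(E)$ and all of its conjugates over $E=\ecl(A)$ land in any $T_0$-model $\Astr\supseteq E$ (which is $\dcl$-closed, so $\acl_N(\Astr)=\Astr$), hence all projection norms equal $\|a\|$, the triple $((a),E,\Astr)$ is minimal, and $a=\pr_\Astr(a)\in\cl_0(E)=E$. This check is sound, provided $\tp$ in the definition of minimal triples is read as $\tp_N$ (with pure $L$-types the minimality would fail for $a\notin\dcl(E)$, since $L$-conjugates with strictly smaller projection exist; but the $\tp_N$ reading is the one the paper's later uses force anyway). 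One correction to your commentary, though: the extensionperp alternative does \emph{not} break down in the attractor case. Since $\dcl(E)\subseteq\cl_0(E)=E$ (an element of $\dcl(E)$ is its own minimal triple), any $a\notin E$ has $d(a,E)>0$, and an independent sequence of $E$-conjugates is pairwise $\sqrt{2}\,d(a,E)$-separated, contradicting compactness of the orbit; the case $a\in\overline{\acl_N(E)}\setminus E$ you worry about is vacuous because $E$ is closed. This separation argument is, in essence, the paper's own proof.

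The genuine gap is in your other direction, at the step $\bcl_N(A)\subseteq\acl_N(A)$. Your proposed justification --- build a generic existentially closed extension of a $T_0$-model over $A$ with witnesses chosen independent from a non-algebraic $x$, thereby omitting $x$ --- is circular: such a model exists if and only if $x\notin\bcl_N(A)$, and since $\cl_0(A)\subseteq\bcl_N(A)$ the construction \emph{must} fail whenever $x\in\cl_0(A)$, because then every $T_N$-model over $A$ contains $x$ and no witnesses independent from $x$ can be found (this is exactly the attractor phenomenon of Observation \ref{aclN}). Nothing in your sketch rules out a non-algebraic element of $\cl_0(A)$, and ruling that out is precisely the content of the inclusion $\ecl(A)\subseteq\acl_N(A)$ you are proving. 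Fortunately the claim admits a short non-circular proof you can substitute: $\bcl_N(A)$ is a small, $\mathrm{Aut}(\monst/A)$-invariant set, so each of its elements has small orbit over $A$ and is therefore algebraic --- a non-algebraic element has, for some $\varepsilon>0$, unboundedly many pairwise $\varepsilon$-separated conjugates by compactness. With that patch your architecture ($\ecl\subseteq\bcl_N$ by transfinite induction using idempotence of $\bcl_N$, then $\bcl_N=\acl_N$, plus your aclN2-based converse) is complete; note that it then also avoids the paper's own sloppiness here, namely applying Lemma \ref{extensionperp} at base $A$ without the hypothesis $\ecl(A)=A$.
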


\begin{proof}
  Clearly $\acl_N(A)\subset \bcl_N(A)$. On the other hand, assume that 
  $x\notin \acl_N(A)$. Let $\Bstr$ be a model of $T_N$ such that 
  $A\subset \Bstr$. By Lemma \ref{extensionperp}, we may assume that 
  $x\ind_A \Bstr$. Then $x\notin\Bstr$, so $x\notin \bcl(A)$, so $x\notin \ecl(A)$.
\end{proof}

\begin{theo}
Suppose $\ecl(A)=A$, $A\subset B,C$, $B\ind[*]_AC$
(i.e. $\ecl(B)\ind_A\ecl(C)$), $\bar a\ind[*]_AB$, $\bar b\ind[*]_AC$ and
$\tp(\bar a/A)=\tp(\bar b/A)$. Then there is $\bar c$ such that $\tp(\bar c/B)=\tp(\bar a/B)$,
$\tp(\bar c/C)=\tp(\bar b/C)$ and $\bar c\ind[*]_ABC$.  
\end{theo}

\begin{proof}
  Wlog $\ecl(B)=B$ and $\ecl(C)=C$. By Lemma~\ref{extensionperp} we
  can find models $A_0$, $A_1$, $B^*$ and $C^*$ of $T_0$ such that
  $A\bar a\subset A_0$, $A\bar b\subset A_1$, $B\subset B^*$ and $C\subset
  C^*$, such that $B^*\ind[*]_AC^*$, $A_0\ind[*]_AB^*$ and
  $A_1\ind[*]_AC^*$. We can also find models of $T_0$, $A_0^*$,
  $A_1^*$ and $D^*$ such that $A_0B^*\subset A_0^*$, $A_1C^*\subset
  A_1^*$ and $B^*C^*\subset D^*$ and wlog we may assume that $\bar a$ and
  $\bar b$ are chosen so that $A_0^*\ind_{B^*}D^*$, $A_1^*\ind_{C^*}D^*$,
  and that there is an automorphism $F$ of the monster model fixing
  $A$ pointwise such that $F(\bar a)=\bar b$, $F(A_0)=A_1$ and
  $F(A_0^*)\ind_{A_1}A_1^*$. Notice that now
  \[ A_0\ind_AD^*\mbox{ and }A_1\ind_AD^*.\]
  We can now find Hilbert spaces $A^*,A_0^{**},A_1^{**}$ and $E$ such
  that
  \begin{enumerate}[(i)]
  \item $E$ is generated by $D^*A_0^{**}A_1^{**}$,
  \item $A\subset A^*\subset A_0^{**}\cap A_1^{**}$, $B^*\subset
    A_0^{**}$, $C^*\subset A_1^{**}$,
  \item There are Hilbert space isomorphisms $G:A_0^{**}\to A_0^*$ and
    $H:A_1^{**}\to A_1^*$ such that
    \begin{enumerate}[a)]
    \item $F\circ G\restriction A^*=H\restriction A^*$,
    \item $G\restriction B^*=\id_{B^*}$, $H\restriction
      C^*=\id_{C^*}$,
    \item $G\cup \id_{D^*}$ generate an isomorphism
      \[ \langle A_0^{**}D^*\rangle \to \langle A_0^*D^*\rangle\]
    \item $H\cup \id_{D^*}$ generate an isomorphism
      \[ \langle A_1^{**}D^*\rangle \to \langle A_1^*D^*\rangle\]
    \item $F\cup G\cup H$ generate an isomorphism
      \[ \langle A_0^{**}A_1^{**}\rangle \to \langle
        F(A_0^*)A_1^*\rangle .\]
    \end{enumerate}
  \end{enumerate}
  We can find these because non-dividing independence in Hilbert spaces has $3$-existence (the independence theorem holds for types over sets).

  Now we choose the ``black points'' of our model: $a\in E$ is black
  if one of the following holds:
  \begin{enumerate}[(i)]
  \item $a\in A_0^{**}$ and $G(a)$ is black,
  \item $a\in A_0^{**}$ and $H(a)$ is black,
  \item $a\in D^*$ and is black.
  \end{enumerate}
  Then in $E$ we define the ``distance to black'' function simply as
  the real distance. Then in $D^*$ there is no change and $G$ and $H$
  remain isomorphisms after adding this structure; $D^*$, $A_0^*$,
  $A_1^*$ and $F(A_0^*)$ witness this.

  So we can assume that $E$ is a submodel of the monster, and letting
  $\bar c=G^{-1}(a)$, $G$ witnesses that $\tp(\bar c/B)=\tp(\bar a/B)$ and $H$
  witnesses that $\tp(\bar c/C)=\tp(\bar b/C)$. We have already seen that
  $A^*\ind_AD^*$ and thus $\bar c\ind[*]_ABC$.
\end{proof}

\begin{prop}
  Suppose $\bar b\nind[*]_AC$, $A\subset B\cap C$ and (wlog)
  $C=\bcl(C)$. Then there exists a formula $\chi\in\tp(\bar b/C)$ such that
  for all $\bar a\models \chi$, $\bar a\nind[*]_AC$.
\end{prop}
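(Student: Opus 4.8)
The goal is to establish \emph{strong finite character} for $\ind[*]$: from the hypothesis that $\bar b$ depends on $C$ over $A$ (i.e. $\bar b \nind[*]_A C$), I want to produce a single formula $\chi \in \tp_N(\bar b/C)$ witnessing this dependence for every realization. The plan is to unwind the definition of $\ind[*]$ and locate a concrete numerical ``witness'' to the dependence, then show that witness is detectable by a condition (formula) over $C$.

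First I would unpack what $\bar b \nind[*]_A C$ means. Since $C = \bcl_N(C)$ and $A = \ecl(A)$ (which by Corollary \ref{eclequalsacl} equals $\acl_N(A)$, so $A$ is already algebraically closed and a model of $T_0$), the failure of $*$-independence says that $\acl_N(A\bar b)$ is \emph{not} Hilbert-space-independent from $C$ over $A$. By the definition of $*$-independence, this means there is some element $e \in \acl_N(A\bar b)$ with $P_{\overline{C}}(e) \neq P_{\overline{A}}(e) = P_A(e)$; equivalently, writing the projection difference, $\|P_C(e) - P_A(e)\| = \eta > 0$ for some concrete positive real $\eta$. The key move is that, by Corollary~\ref{eclequalsacl}, $\acl_N(A\bar b) = \ecl(A\bar b)$, so this algebraic element $e$ is reached from $\bar b$ through finitely many applications of $\cl_0$; thus $e$ lies in $\cl_\alpha(A\bar b)$ for some finite stage, and by Lemma~\ref{extensionperp} (essentially local character of this closure) only a finite tuple from $\bar b$ is relevant to producing $e$. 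This finiteness is what will let me write a single formula.

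Next I would convert the geometric inequality $\|P_C(e) - P_A(e)\| \geq \eta$ into a condition that is \emph{definable over $C$} and holds of $\bar b$. Because $T_N$ is model complete, types are determined by existential formulas $\inf_{\bar y}\varphi(\bar y, \bar x) = 0$; the element $e$, being in $\acl_N(A\bar b)$, is the unique (or isolated) realization of such an existential $L_N$-condition $\theta(\,\cdot\,;\bar b, A)$ recovering $e$ from $\bar b$ and parameters in $A \subseteq C$. Combining $\theta$ with the projection inequality, I would form
\[
\chi(\bar x) \;=\; \inf_{y}\,\max\bigl\{\theta(y;\bar x),\ \eta \dotminus \|P_C(y) - P_A(y)\|\bigr\},
\]
read off as the condition $\chi(\bar x) = 0$, asserting: there exists $y$ algebraic over $A\bar x$ (via $\theta$) whose $C$-projection departs from its $A$-projection by at least $\eta$. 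Here $P_C$ and $P_A$ are definable using parameters from $C$ (since $A \subseteq C$ and $C$ is algebraically closed, both projections are available over $C$), so $\chi$ genuinely has parameters only in $C$, and $\chi \in \tp_N(\bar b/C)$ by construction.

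Finally I would verify that $\chi$ does the job: if $\bar a \models \chi$, then there is a witness $y_0$ algebraic over $A\bar a$ with $\|P_C(y_0) - P_A(y_0)\| \geq \eta > 0$, so $y_0 \in \acl_N(A\bar a)$ certifies that $\acl_N(A\bar a)$ is not independent from $C$ over $A$, i.e. $\bar a \nind[*]_A C$, exactly as required. The main obstacle I anticipate is the second step: making precise that the algebraic element $e$ and the projection difference are captured by a \emph{quantifier-free-over-$C$} or at worst existential-over-$C$ condition, and that the infimum defining $\chi$ is actually attained (or approximately attained in a way compatible with the continuous-logic reading of $\chi(\bar x)=0$). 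This hinges on the fact that algebraic closure in $T_N$ is tame — elements of $\acl_N$ are isolated by their types (as in Observation~\ref{aclN}) — together with model completeness to guarantee the existential formula $\theta$ correctly defines $e$ across models; I would need to be careful that the strictness $\eta > 0$ survives the passage to the closed condition $\chi(\bar a)=0$, possibly by working with a slightly smaller threshold $\eta/2$ to absorb the approximation inherent in continuous logic.
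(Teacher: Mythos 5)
There is a genuine gap, and it sits exactly where you anticipated trouble: in the transfer step from $\bar b$ to an arbitrary realization $\bar a$ of $\chi$. Your formula $\chi(\bar x)=\inf_{y}\max\{\theta(y;\bar x),\ \eta \dotminus \|P_C(y)-P_A(y)\|\}$ only asserts that \emph{some} $y_0$ approximately satisfies $\theta(y_0;\bar a)=0$ and has a projection gap. But satisfying the formula that isolated $e$ over $A\bar b$ does not make $y_0$ algebraic over $A\bar a$: algebraicity is a compactness property of the solution set of the full type, and it is not preserved when the parameters are moved to another tuple satisfying a single formula from that type. Since $\ind[*]$ constrains \emph{only} the elements of $\acl_N(A\bar a)$, a non-algebraic witness $y_0$ with $\|P_C(y_0)-P_A(y_0)\|\geq\eta$ proves nothing: for \emph{every} tuple $\bar a$, including ones with $\bar a\ind[*]_AC$, there are plenty of elements with a large projection gap (any element of $C$ far from $\spane(A)$ will do). So unless $\theta(\cdot;\bar a)$ genuinely pins its solutions inside $\acl_N(A\bar a)$ --- which nothing in your argument guarantees --- $\chi$ can be satisfied by independent tuples, and the final implication $\bar a\models\chi \Rightarrow \bar a\nind[*]_AC$ fails. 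A secondary problem: $\|P_C(y)-P_A(y)\|$ is not a legitimate $L_N(C)$-formula, since $P_{\overline{C}}$ depends on all of the (typically infinite) set $C$ and is only a uniform limit of formulas with finitely many parameters; your closing remark about shrinking $\eta$ gestures at this, but the approximation has to be carried out, and it is where the real work lies.

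The paper's proof avoids algebraic witnesses altogether, which is what makes it succeed. It works with the coordinates of the tuple itself: writing $\bar b=(b_0,\dots,b_n)$, the hypothesis $\bar b\nind[*]_AC$ yields by compactness (saturation) a single $\varepsilon>0$ such that every $\bar a\models\tp(\bar b/C)$ satisfies $\|\pr_C(a_0)\|+\cdots+\|\pr_C(a_n)\|\geq \varepsilon+\|\pr_{\bcl(A)}(a_0)\|+\cdots+\|\pr_{\bcl(A)}(a_n)\|$; a second application of compactness replaces the whole type by one formula $\chi\in\tp(\bar b/C)$ at the cost of weakening $\varepsilon$ to $\varepsilon/2$ --- this is also precisely how the finitely-many-parameters issue with $\pr_C$ gets absorbed. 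The conclusion then certifies dependence soundly: a positive gap forces some coordinate $a_i$ with $\pr_C(a_i)\neq\pr_{\bcl(A)}(a_i)$, and since $a_i\in\acl_N(A\bar a)$ trivially, while $\acl_N(A)\subseteq\bcl(A)$ and $C\subseteq\acl_N(AC)$, this coordinate itself is the required element of $\acl_N(A\bar a)$ with differing projections. In short: where you route the dependence through an auxiliary algebraic element and then must (but cannot) transport its algebraicity, the paper takes the witness to be the tuple itself, for which membership in $\acl_N(A\bar a)$ is automatic.
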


\begin{proof}
  By compactness, we can find $\varepsilon>0$ such that (letting
  $\bar b=(b_0,\dots,b_n)$, 
  ($\bar a=(a_0,\dots,a_n)$),
  \begin{equation}
    \label{eq:forking}
    \forall \bar a\models \tp(\bar b/B), \|
    \pr_C(a_0)\|+\cdots+\|\pr_C(a_n)\| \geq \varepsilon +
  \| \pr_{\bcl(A)}(a_0)\|+\cdots+\|\pr_{\bcl(A)}(a_n)\|.
  \end{equation}
Again by compactness we can find $\chi\in \tp(\bar b/B)$ such
that (\ref{eq:forking}) holds when we replace $\tp(\bar b/B)$ by $\chi$
and $\varepsilon$ by $\varepsilon/2$, that is: 

\begin{equation}
\forall \bar a\models \chi, \|
    \pr_C(a_0)\|+\cdots+\|\pr_C(a_n)\| \geq \varepsilon/2 +
  \| \pr_{\bcl(A)}(a_0)\|+\cdots+\|\pr_{\bcl(A)}(a_n)\|.
  \end{equation}
and in particular $\bar a\nind[*]_AC$, as we wanted
\end{proof}


\end{document}